\newcommand\nthalias[1]{\AddToHook{env/#1/begin}{\crefalias{lemma}{#1}}}
\crefname{section}{Section}{Sections}
\crefname{subsection}{\S}{\S\S}
\crefname{subsubsection}{\S}{\S\S}
\theoremstyle{plain}
\newtheorem{lemma}{Lemma}[section]
\newtheorem{proposition}[lemma]{Proposition}
\newtheorem{theorem}[lemma]{Theorem}
\theoremstyle{plain}
\theoremstyle{plain}
\newtheorem{definition}[lemma]{Definition}
\newtheorem{example}[lemma]{Example}
\newtheorem{remark}[lemma]{Remark}
\newtheorem{remarks}[lemma]{Remarks}
\newtheorem{construction}[lemma]{Construction}
\crefname{definition}{definition}{definitions}
\crefname{ex}{example}{examples}
\crefname{exs}{example}{examples}
\crefname{remark}{remark}{remarks}
\crefname{remarks}{remark}{remarks}
\crefname{convention}{convention}{conventions}
\crefname{notation}{notation}{notations}
\crefname{table}{table}{tables}
\crefname{lemma}{lemma}{lemmas}
\crefname{proposition}{proposition}{propositions}
\crefname{propositionN}{proposition}{propositions}
\crefname{corollary}{corollary}{corollaries}
\crefname{corollaryN}{corollary}{corollaries}
\crefname{theorem}{theorem}{theorems}
\crefname{theoremN}{theorem}{theorems}
\crefname{enumi}{}{}
\crefname{assumption}{assumption}{Assumptions}
\crefname{construction}{construction}{Constructions}
\crefname{question}{question}{Questions}
\crefname{equation}{}{}
\numberwithin{equation}{section}
\theoremstyle{nonumberplain}
\newtheorem{proof}{Proof}
\newcommand\pf[1]{\newtheorem{#1}{Proof of \Cref{#1}}}
\newcommand\bC{{\mathbb C}}
\newcommand\bG{{\mathbb G}}
\newcommand\bQ{{\mathbb Q}}
\newcommand\bR{{\mathbb R}}
\newcommand\bS{{\mathbb S}}
\newcommand\bT{{\mathbb T}}
\newcommand\bZ{{\mathbb Z}}
\newcommand\cA{{\mathcal A}}
\newcommand\cC{{\mathcal C}}
\newcommand\cE{{\mathcal E}}
\newcommand\cF{{\mathcal F}}
\newcommand\cL{{\mathcal L}}
\newcommand\cP{{\mathcal P}}
\newcommand\ol{\overline}
\DeclareMathOperator{\End}{\mathrm{End}}
\DeclareMathOperator{\Hom}{\mathrm{Hom}}
\DeclareMathOperator{\Aut}{\mathrm{Aut}}
\DeclareMathOperator{\Max}{\mathrm{Max}}
\DeclareMathOperator{\im}{\mathrm{im}}
\newcommand\spr[1]{\cite[\href{https://stacks.math.columbia.edu/tag/#1}{Tag {#1}}]{stacks-project}}
\newcommand{\qedhere}{\mbox{}\hfill\ensuremath{\blacksquare}}
\newcommand{\xrightarrowdbl}[2][]{%
  \xrightarrow[#1]{#2}\mathrel{\mkern-14mu}\rightarrow
}
\title{Algebra bundles, projective flatness and rationally-deformed tori}
\author{Alexandru Chirvasitu}
\begin{document}

\date{}

\newcommand{\Addresses}{{
  \bigskip
  \footnotesize

  \textsc{Department of Mathematics, University at Buffalo}
  \par\nopagebreak
  \textsc{Buffalo, NY 14260-2900, USA}  
  \par\nopagebreak
  \textit{E-mail address}: \texttt{achirvas@buffalo.edu}


}}

\maketitle

\begin{abstract}
  We show that isomorphism classes $[\mathcal{A}]$ of flat $q\times q$ matrix bundles $\mathcal{A}$ (or projectively flat rank-$q$ complex vector bundles $\mathcal{E}$) on a pro-torus $\mathbb{T}$ are in bijective correspondence with the \v{C}ech cohomology group $H^2(\mathbb{T},\mu_q:=\text{$q^{th}$ roots of unity})$ (respectively $H^2(\mathbb{T},\mathbb{Z})$) via the image of $[A]\in H^1(\mathbb{T},PGL(q,\mathcal{C}_{\mathbb{T}}))$ through $H^1(\mathbb{T},PGL(q,\mathcal{C}_{\mathbb{T}}))\xrightarrow{\quad}H^2(\mathbb{T},\mu(q,\mathcal{C}_{\mathbb{T}}))$ (respectively the first Chern class $c_1(\mathcal{E})$). This is in the spirit of Auslander-Szczarba's result identifying real flat bundles on the torus with their first two Stiefel-Whitney classes, and contrasts with classifying spaces $B\Gamma$ of compact Lie groups $\Gamma$ (as opposed to $\mathbb{T}^n\cong B\mathbb{Z}^n$), on which flat non-trivial vector bundles abound. The discussion both recovers the Disney-Elliott-Kumjian-Raeburn classification of rational non-commutative tori $\mathbb{T}^n_{\theta}$ with a different, bundle-theoretic proof, and sheds some light on the connection between topological invariants associated to $\mathbb{T}^2_{\theta}$, $\theta\in\mathbb{Q}$ by Rieffel and respectively H{\o}egh-Krohn-Skjelbred.
\end{abstract}

\noindent \emph{Key words:
  Chern class;
  Stiefel-Whitney class;
  algebra bundle;
  classifying space;
  cocycle;
  orientation;
  projectively flat;
  vector bundle
}

\vspace{.5cm}

\noindent{MSC 2020: 55R40; 55R37; 46M20; 16S35; 20J06; 46L85; 54B40; 55R25
  
}


\section*{Introduction}

The original motivation for the present investigation lies in work carried out in \cite{hks_erg,rief_canc} towards the classification of \emph{non-commutative tori} $\bT^2_{\theta}$, $\theta\in \bR$ defined \cite[Example 1.1.9]{khal_basic} as objects formally dual to the $C^*$-algebras
\begin{equation*}
  C(\bT^2_{\theta})
  :=
  \Braket{\text{unitaries }u,v\ :\ vu=e^{2\pi i \theta}uv}. 
\end{equation*}
For lowest-terms rational $\theta=\frac pq$ these turn out (\cite[Proposition 1.1.1]{khal_basic}, \cite[Proposition 12.2]{gbvf_ncg}) to be $q\times q$ matrix bundles $\cA\cong \cE\otimes \cE^*$ over $\bT^2$ (with $\cE$ being a rank-$q$ vector bundle), and the isomorphism problem for $C(\bT^2_{\theta})$, $\theta\in \bQ$ is resolved
\begin{itemize}[wide]
\item  in \cite[Theorems 3.9 and 3.12]{rief_canc} by relying on a \emph{twist} (an integer: \cite[p.299]{rief_canc}) attached to the rank-$q$ $\cE\xrightarrowdbl{}\bT^2$ and denoted below by $\mathrm{tw}(\cE)$;

\item and in \cite[Theorem 3.1]{hks_erg} by employing an invariant
  \begin{equation}\label{eq:omegaa}
    \omega(\cA)\in \bZ/q\subset \bS^1
  \end{equation}
  (\cite[Definition post Proposition 3.1]{hks_erg}).
\end{itemize}
The desire to clarify the relationship between twists and $\omega$ provided the initial impetus for the sequel. As it turns out,
\begin{itemize}[wide]
\item $\mathrm{tw}(\cE)$ is essentially (modulo sign/orientation choices and such irrelevancies) the \emph{first Chern class} \cite[\S 14.2]{ms_char} $c_1(\cE)\in H^2(\bT^2,\bZ)\cong \bZ$: \Cref{le:rief};

\item regarding \cite[Assertion 18.3.2]{hjjm_bdle} the isomorphism class of $\cA$ as a homotopy class $[f]$ of maps $\bT^2\to BPU(q)$, $\omega(\cA)$ is effectively (again, up to a sign: \Cref{pr:whatisomega}) the cohomology class
  \begin{equation*}
    \beta(\cA)
    :=
    [\gamma\circ f]
    \quad
    \text{\cite[\S 18.3.7]{hjjm_bdle}},
  \end{equation*}
  where
  \begin{equation*}
    BPU(q)
    \xrightarrow{\quad\gamma\quad}
    B^2\bZ/q
    \cong
    K(\bZ/q,2)
    \quad
    \left(
      \text{\emph{Eilenberg-MacLane space} \cite[Definition 9.6.1]{hjjm_bdle}}
    \right)    
  \end{equation*}
  is a portion of the classifying-space fiber sequence of \cite[\S 18.3.6]{hjjm_bdle} (indicating that it is perhaps not as ad-hoc as \cite[\S 3]{rief_canc} or \cite[\S 0, p.106]{MR962286} seem to suggest);

\item and finally (\Cref{pr:2constr}), $\omega(\cE\otimes \cE^*)$ is (essentially, give or take a sign again) the image of $\mathrm{tw}(\cE)$ under the map
  \begin{equation*}
    \bZ
    \cong
    \pi_1(U(q))
    \xrightarrow{\quad}
    \pi_1(PU(q))
    \cong
    \mu_q
    :=
    \left\{\text{$q^{th}$ roots of unity}\right\}.
  \end{equation*}
\end{itemize}

As the bundles $\cE$ of \cite[Proposition 1.1.1]{khal_basic} (and featuring in \cite[Notation 3.7]{rief_canc} as projective modules over $C(\bT^2)$) relevant to quantum-torus classification turn out to be \emph{projectively flat}\footnote{Not, however, flat, as claimed in \cite[Proposition 1.1.1]{khal_basic}; see also \cite[Remark 4.4(1)]{cp_cont-equiv_xv3}.} (so that the resulting matrix bundle $\cA$ is \emph{flat}; \Cref{se:app} recalls these and other assorted vocabulary), this leads naturally to an examination of how and to what extent the various topological invariants discussed above characterize such bundles. 

\Cref{th:protoriok} below aims at a complex analogue, for projectively flat vector bundles and flat matrix algebras, of the result \cite[Theorem 3.3]{as_vbtor} casting the first two \emph{Stiefel-Whitney} classes \cite[Definition 10.3.7]{hjjm_bdle} $w_i(\cE)$, $i=1,2$ as a complete invariant for flat, \emph{real} vector bundles over tori. 

At relatively little additional cost, one can work over \emph{pro-tori} \cite[Definitions 9.30]{hm5} instead: compact connected abelian groups. These are also precisely
\begin{itemize}[wide]
\item the \emph{cofiltered limits} \spr{04AY}
  \begin{equation}\label{eq:protor.cofilt}
    \bT\cong \varprojlim_i \left(\text{tori }\bT^{n_i}\cong \left(\bS^1\right)^{n_{i}}\right)
    ,\quad
    \bT^{n_j}
    \xrightarrowdbl[\quad\text{onto}\quad]{\quad\varphi_{ij}\quad}
    \bT^{n_i}
    ,\quad
    i\le j
  \end{equation}
  (by \cite[Corollary 2.43]{hm5} and the fact that compact, connected abelian Lie groups are tori \cite[Theorem 4.2.4]{de});
  
\item or alternatively \cite[Corollary 8.5]{hm5}, the \emph{Pontryagin duals} $\widehat{\Gamma}:=\Hom(\Gamma,\bS^1)$ of torsion-free abelian $\Gamma$.
\end{itemize}

In addition to the \emph{Chern classes} \cite[Definition 10.3.1]{hjjm_bdle} $c_k(\cE)$ attached to a vector bundle $\cE$ we also need cohomology classes associated to $q\times q$ matrix bundles $\cA$ (over compact Hausdorff $X$). To that end:
\begin{itemize}[wide]
\item Regard (the isomorphism class of) $\cA$ as an element of $H^1(X,PGL(q,\cC_X))$ per \cite[Assertion 19.6.2]{hjjm_bdle}, with $\cC_X$ denoting the sheaf of continuous complex-valued functions on $X$.
  
\item Then map said element to
  \begin{equation}\label{eq:muq}
    \beta(\cA)
    =
    \beta_q(\cA)
    \in
    H^2(X,\mu(q,\cC_X))
    \cong
    H^2(X,\mu_q)
    ,\quad
    \begin{aligned}
      \mu(q,\bullet)
      &:=
        \text{sheaf of $q^{th}$ roots of 1}\\
      \mu_q
      &:=
        \text{$q$-torsion of }\bS^1\\
      &=\text{center of $SU(q)$}\subset \bS^1
    \end{aligned}
  \end{equation}
  \begin{equation}\label{eq:conn.coh}
    H^1(X,PGL(q,\cC_X))\xrightarrow{\quad}H^2(X,\mu(q,\cC_X))
  \end{equation}
  in \cite[Remark 19.6.3]{hjjm_bdle} (or the $\delta'$ of \cite[pp.48-49]{groth_brau-1}) resulting from the \emph{long exact cohomology sequence} \cite[\S II.2.2]{bred_shf_2e_1997} attached to
  \begin{equation*}
    1\to
    \mu(q,\cC_X)
    \xrightarrow{\quad}
    SL(q,\cC_X)
    \xrightarrow{\quad}
    PGL(q,\cC_X)
    \to 1.
  \end{equation*}
\end{itemize}

All of this in hand, the result alluded to above is:

\begin{theorem}\label{th:protoriok}
  Let $\bT$ be a pro-torus and $q\in\bZ_{>0}$
  \begin{enumerate}[(1),wide]
  \item\label{item:projflc1} The map
    \begin{equation}\label{eq:projflc1}
      \left(\text{rank-$q$ vector bundle }\cE\right)
      \xmapsto{\quad}
      c_1(\cE)\in H^2(\bT,\bZ)
    \end{equation}
    restricts to a bijection on the set of isomorphism classes of projectively flat rank-$q$ vector bundles.

  \item\label{item:flbeta} Similarly, the map
    \begin{equation}\label{eq:flbeta}
      \left(\text{$q\times q$ matrix bundle }\cA\right)
      \xmapsto{\quad\text{\Cref{eq:muq}}\quad}
      \beta_q(\cA)\in H^2(\bT,\mu_q)
    \end{equation}
    restricts to a bijection on the set of isomorphism classes of flat matrix bundles.
  \end{enumerate}
\end{theorem}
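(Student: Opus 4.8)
The plan is to reduce everything to an ordinary torus and then to the representation theory of $\bZ^n$. First I would invoke continuity: for the cofiltered presentation \Cref{eq:protor.cofilt} one has $H^2(\bT,-)\cong\varinjlim_i H^2(\bT^{n_i},-)$ (continuity of \v Cech cohomology), while both projectively flat vector bundles and flat matrix bundles on the compact limit $\bT$, together with their isomorphisms, are pulled back from some finite stage $\bT^{n_i}$ (a vector/algebra bundle on a cofiltered limit of compacta is finitely presented, and a homomorphism into $PU(q)$ has finitely generated image, hence factors through a finite stage). Since the bonding maps are the standard projections, $\beta_q$ and $c_1$ are compatible with these colimits, so it suffices to treat a single $\bT^n$. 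I would also record that $H^2(\bT^n,\bZ)=\bigwedge^2\bZ^n$ and $H^3(\bT^n,\bZ)=\bigwedge^3\bZ^n$ are torsion-free, a property inherited by the limit.

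On $\bT^n\cong B\bZ^n$ a flat matrix bundle is the same as a homomorphism $\bZ^n\to PU(q)$, i.e.\ a commuting $n$-tuple, taken up to conjugacy for \emph{flat} isomorphism but only up to homotopy of classifying maps for the coarser isomorphism of \Cref{th:protoriok}. Lifting the generators to $U(q)$ turns their pairwise commutators into scalars in $\mu_q$ (determinant $1$ forces the $q$-th-power condition), and under $H^2(\bT^n,\mu_q)\cong\bigwedge^2(\bZ/q)^n$ this alternating commutator pairing is exactly the class \Cref{eq:muq}. A projectively flat rank-$q$ bundle is such a homomorphism together with a lift to a genuine vector bundle; the lifts form a torsor under $\mathrm{Pic}(\bT^n)\cong H^2(\bT^n,\bZ)$, every flat $PU(q)$-bundle admits one (the obstruction is a torsion class in $H^3(\bT^n,\bZ)$, which vanishes as the latter is torsion-free), and $\beta_q(\cE\otimes\cE^*)\equiv c_1(\cE)\pmod q$. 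These remarks reduce part (1) to part (2): if $c_1(\cE)=c_1(\cE')$ then the two $\beta_q$'s agree, so by part (2) the projectivizations are isomorphic, whence $\cE'\cong\cE\otimes L$ with $q\,c_1(L)=0$, forcing $c_1(L)=0$ and $\cE'\cong\cE$ by torsion-freeness; surjectivity of $c_1$ likewise follows from that of $\beta_q$ together with the torsor action of $q\,H^2(\bT^n,\bZ)$.

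The heart is the injectivity of $\beta_q$ at the level of topological isomorphism. Given a commuting tuple, the subgroup of $U(q)$ generated by the chosen lifts and $\mu_q$ is a two-step nilpotent, finite-by-toral group of Heisenberg type. I would decompose $\bC^q\cong W\otimes M$ by a Stone--von Neumann argument, where $W$ is the Heisenberg representation determined by the commutator form $b=\beta_q$ and $M$ is a multiplicity space on which the radical acts through a flat toral representation. Since flat line bundles have vanishing $c_1$ and are therefore topologically trivial, the factor $\End(M)$ is topologically a trivial matrix bundle, so $\cA\cong\End(W)\otimes M_{q/\dim W}$ up to isomorphism, an object depending only on $b$. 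Hence two flat matrix bundles with the same $\beta_q$ are isomorphic, which gives injectivity (and explains why the finer flat-isomorphism data carried by $M$ is invisible to $\beta_q$).

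The step I expect to be the real obstacle is surjectivity: realizing a prescribed class $b\in H^2(\bT^n,\mu_q)$ by a flat bundle of rank exactly $q$. Here I would put $b$ into symplectic normal form over $\bZ/q$ and assemble the corresponding tensor product of clock-and-shift (generalized Weyl) representations, which is the representation-theoretic shadow of the rational noncommutative tori of the Introduction. The delicate point -- and the place where the rank $q$ enters decisively -- is matching $\dim W$ against $q$; controlling this finite-Heisenberg dimension count, so that $W$ fits inside a rank-$q$ bundle and the prescribed class is attained, is the crux on which the bijectivity in \Cref{th:protoriok} ultimately rests.
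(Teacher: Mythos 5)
Your architecture coincides with the paper's everywhere except at surjectivity. The reduction to a single $\bT^n$ by continuity (the paper invokes Lee--Raymond's \v{C}ech extension of $k_{\bG}$ together with continuity of \v{C}ech cohomology, rather than finite presentation of the bundles themselves, but the effect is identical), the description of a flat matrix bundle by a representation $\bZ^n\to PU(q)$ whose lifted generators have commutator pairing equal to $\beta_q$, the passage to the radical of that pairing --- on which the lifts are honest representations, trivialized via \Cref{le:flatistriv} --- followed by the essential uniqueness of the irreducible projective representation attached to a non-degenerate cocycle (Slawny, Elliott), and the deduction of injectivity in part \Cref{item:projflc1} from part \Cref{item:flbeta} via Dixmier--Douady ($\cE'\cong\cE\otimes\cL$ with $qc_1(\cL)=0$, then torsion-freeness of $H^2$): all of this is the paper's proof, with your Stone--von Neumann factorization $\bC^q\cong W\otimes M$ merely repackaging its restriction-to-the-radical step. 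These portions of your proposal are sound.

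The step you leave open --- surjectivity, and specifically whether the finite-Heisenberg dimension attached to a prescribed $b\in H^2(\bT^n,\mu_q)$ can be matched against the rank $q$ --- is dispatched by the paper in one line: it takes $\cE=\cL\oplus\cO^{\oplus(q-1)}$ with $c_1(\cL)$ the prescribed class, asserts this is projectively flat, and then gets surjectivity of $\beta_q$ from that of $c_1$ via \Cref{eq:z2muq}. Your hesitation here is not misplaced, and you should not simply adopt the paper's step: for $n\ge 4$ the dimension count you flag is a genuine obstruction. Concretely, realizing $b=\bar{e}_1\wedge \bar{e}_2+\bar{e}_3\wedge \bar{e}_4\in H^2(\bT^4,\mu_2)$ by a flat $M_2(\bC)$-bundle would require unitaries $u_1,\dots,u_4\in U(2)$ with $u_1u_2=-u_2u_1$, $u_3u_4=-u_4u_3$ and all other pairs commuting; but $\{1,u_1,u_2,u_1u_2\}$ already spans $M_2(\bC)$, forcing $u_3$ to be scalar and hence to commute with $u_4$ --- a contradiction. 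Consistently, $\End\bigl(\cL\oplus\cO^{\oplus(q-1)}\bigr)$ contains $\cL\oplus\cL^*$ as a summand and so has $c_2=-(q-1)c_1(\cL)^2$, which is non-zero on $\bT^4$ for suitable $\cL$; since a projectively flat bundle has flat, hence (by \Cref{le:flatistriv}) trivial, endomorphism bundle, $\cL\oplus\cO^{\oplus(q-1)}$ is then not projectively flat. So the ``crux'' you identify is real: neither your sketch nor the paper's argument establishes surjectivity beyond the range where $c_1(\cL)^2=0$ and the clock-and-shift construction suffices (e.g.\ $n\le 3$), and for higher-dimensional (pro-)tori the surjectivity assertions in \Cref{th:protoriok} as stated appear to fail rather than merely to lack proof.
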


\Cref{th:rat.n.tori} gives an application of the preceding material to the isomorphism problem for $C^*$-algebras of the form $C(\bT^n_{\theta})\otimes M_m$, where $C(\bT^n_{\theta})$ is the \emph{non-commutative $n$-torus algebra} denoted by $\overline{A}_{\theta}$ on \cite[p.193]{rief_case} and $\theta\in M_n(\bQ)$ is a skew-symmetric deformation parameter. 

\begin{theorem}\label{th:rat.n.tori}
  For $m,m',n,n'\in \bZ_{\ge 1}$ and skew-symmetric $\theta,\theta'$ respectively in $M_n(\bQ)$ and $M_{n'}(\bQ)$ the following conditions are equivalent.
  \begin{enumerate}[(a),wide]
  \item\label{item:th:rat.n.tori:tnm} There is a $C^*$ isomorphism
    \begin{equation*}
      C\left(\bT^n_{\theta}\right)\otimes M_m
      \quad\cong\quad
      C\left(\bT^{n'}_{\theta'}\right)\otimes M_{m'}.
    \end{equation*}

  \item\label{item:th:rat.n.tori:tn} We have
    \begin{equation*}
      n=n'
      ,\quad
      m=m'
      \quad\text{and}\quad
      C\left(\bT^n_{\theta}\right)
      \quad\cong\quad
      C\left(\bT^{n}_{\theta'}\right).
    \end{equation*}
    
  \item\label{item:th:rat.n.tori:tht} We have
    \begin{equation*}
      n=n'
      ,\quad
      m=m'
      \quad\text{and}\quad
      \theta'\in \left\{T\theta T^t\ :\ T\in GL(n,\bZ)\right\}+M_n(\bZ).
    \end{equation*}
  \end{enumerate}
\end{theorem}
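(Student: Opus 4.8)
The plan is to run the cycle of implications \ref{item:th:rat.n.tori:tn}$\Rightarrow$\ref{item:th:rat.n.tori:tnm}, \ref{item:th:rat.n.tori:tht}$\Rightarrow$\ref{item:th:rat.n.tori:tn} and \ref{item:th:rat.n.tori:tnm}$\Rightarrow$\ref{item:th:rat.n.tori:tht}, with only the last requiring the bundle-theoretic machinery. The implication \ref{item:th:rat.n.tori:tn}$\Rightarrow$\ref{item:th:rat.n.tori:tnm} is immediate: tensor the isomorphism $C(\bT^n_\theta)\cong C(\bT^n_{\theta'})$ with $M_m$. For \ref{item:th:rat.n.tori:tht}$\Rightarrow$\ref{item:th:rat.n.tori:tn} I would argue purely algebraically: adding an integer matrix $N$ to $\theta$ leaves the defining relations $u_ju_k=e^{2\pi i\theta_{jk}}u_ku_j$ unchanged since $e^{2\pi iN_{jk}}=1$, while a change of unitary generators $u_j\mapsto\prod_k u_k^{T_{kj}}$ indexed by $T\in GL(n,\bZ)$ is invertible over $\bZ$ and implements $\theta\mapsto T\theta T^t$ (up to the phases needed to keep the monomials unitary). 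Together these realize the entire coset in \ref{item:th:rat.n.tori:tht} by genuine $C^*$-isomorphisms.

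The substance is \ref{item:th:rat.n.tori:tnm}$\Rightarrow$\ref{item:th:rat.n.tori:tht}, and the first task is to put $A_\theta:=C(\bT^n_\theta)$ into the bundle picture. For rational $\theta$ the center of $A_\theta$ is the group $C^*$-algebra of the finite-index subgroup $\Lambda_\theta:=\{m\in\bZ^n:\theta m\in\bZ^n\}$, so the spectrum of $A_\theta$ — and of $A_\theta\otimes M_m$ — is the torus $Y_\theta:=\widehat{\Lambda_\theta}\cong\bT^n$, over which $A_\theta$ is the section algebra of a matrix bundle $\cA_\theta$: the fibre is the simple twisted group algebra of the finite symplectic quotient $\bZ^n/\Lambda_\theta$, of matrix size $q_\theta=\sqrt{[\bZ^n:\Lambda_\theta]}$. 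This bundle is flat because $\cA_\theta\cong\cE_\theta\otimes\cE_\theta^*$ for a projectively flat $\cE_\theta$ (the projective-module structure recalled in the introduction), and $A_\theta\otimes M_m$ is then the section algebra of the flat rank-$q_\theta m$ bundle $\cA_\theta\otimes\underline{M_m}$.

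With this in hand I would feed the data into \Cref{th:protoriok}\ref{item:flbeta}. An isomorphism as in \ref{item:th:rat.n.tori:tnm} restricts to the centers, yielding a homeomorphism $\phi\colon Y_{\theta'}\xrightarrow{\sim}Y_\theta$ and exhibiting $\cA_{\theta'}\otimes\underline{M_{m'}}$ as $\phi^*(\cA_\theta\otimes\underline{M_m})$; comparing fibre dimensions gives $q_\theta m=q_{\theta'}m'=:Q$, and since $\dim Y_\theta=n$ it gives $n=n'$. Every self-homeomorphism of $\bT^n$ is homotopic to an element of $\Aut(\bT^n)=GL(n,\bZ)$, so on $H^2(\bT^n,\mu_{Q})\cong\bigwedge^2((\bZ/Q)^n)^*$ the map $\phi^*$ acts through $\bigwedge^2$. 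Because the invariant of \Cref{th:protoriok}\ref{item:flbeta} is a complete one, the classes $\beta(\cA_\theta\otimes\underline{M_m})$ and $\beta(\cA_{\theta'}\otimes\underline{M_{m'}})$ must then agree up to this $GL(n,\bZ)$-action. The concluding computation is to identify $\beta(\cA_\theta)\in H^2(Y_\theta,\mu_{q_\theta})$ with the class of $\theta$ modulo $M_n(\bZ)$ under $H^2(\bT^n,\mu_{q})\cong\bigwedge^2((\bZ/q)^n)^*$, compatibly with the congruence action $\theta\mapsto T\theta T^t$; unwinding equality of the $\beta$-classes then produces exactly $\theta'\in\{T\theta T^t:T\in GL(n,\bZ)\}+M_n(\bZ)$, and this relation preserves the index $[\bZ^n:\Lambda]$, so $q_\theta=q_{\theta'}$ and hence $m=m'$.

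I expect the main obstacle to be precisely this last dictionary: pinning down $\beta(\cA_\theta)$ as the class of $\theta\bmod\bZ$ and verifying its equivariance under the two $GL(n,\bZ)$-actions — by congruence on skew matrices and by $\bigwedge^2$ on $H^2(\bT^n,\mu_q)$ — together with the bookkeeping relating the base torus $Y_\theta=\widehat{\Lambda_\theta}$ to the standard $\bT^n$ through the finite covering dual to $\Lambda_\theta\hookrightarrow\bZ^n$. Keeping the sign and orientation normalizations consistent with \Cref{eq:omegaa} and with the Chern-class identification of \Cref{th:protoriok}\ref{item:projflc1} is where the care is needed; the homotopy-theoretic input and the index counting that recovers $m$ are routine once this dictionary is fixed.
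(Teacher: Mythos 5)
Your overall architecture agrees with the paper's: the same implication cycle, the same elementary arguments for \Cref{item:th:rat.n.tori:tn}$\Rightarrow$\Cref{item:th:rat.n.tori:tnm} and \Cref{item:th:rat.n.tori:tht}$\Rightarrow$\Cref{item:th:rat.n.tori:tn} (the paper phrases the latter via cocycle twists of the group algebra $\bC\bZ^n$ rather than generators and relations, but it is the same computation), and the same reduction of \Cref{item:th:rat.n.tori:tnm}$\Rightarrow$\Cref{item:th:rat.n.tori:tht} to the classification of (projectively) flat bundles over $\bT^n$. The only structural variation is that you apply \Cref{th:protoriok}\Cref{item:flbeta} to the matrix bundles directly, whereas the paper first invokes Dixmier--Douady to convert $\End(\cE_\theta)\cong\End(\cE_{\theta'})$ into $\cE_{\theta'}\cong\cE_\theta\otimes\cL$ and then uses \Cref{th:protoriok}\Cref{item:projflc1} to turn that into the divisibility $q\mid c_1(\cE_\theta)-c_1(\cE_{\theta'})$; by \Cref{re:munclass} the two routes are interchangeable. (Deriving $n=n'$ from the spectra and $m=m'$ at the end from $q_\theta=q_{\theta'}$ is also fine; the paper obtains both by citation to prior work.)

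The genuine gap is exactly the step you postpone: the ``dictionary'' identifying $\beta(\cA_\theta)\in H^2(Y_\theta,\mu_{q_\theta})$ with ``the class of $\theta$ modulo $M_n(\bZ)$''. As stated this cannot be right, because $\Lambda_\theta=\{m:\theta m\in\bZ^n\}$ is precisely the radical of the bicharacter $e^{2\pi i\langle\cdot,\theta\cdot\rangle}$ on $\bZ^n$, so that bicharacter restricts \emph{trivially} to the lattice defining $Y_\theta=\widehat{\Lambda_\theta}$; likewise, the bookkeeping you propose ``through the finite covering dual to $\Lambda_\theta\hookrightarrow\bZ^n$'' kills the class outright (on each $2$-torus block the covering multiplies the relevant component of $c_1$ by the square of that block's denominator, so the pullback of $\beta$ vanishes and $\pi^*\cA_\theta$ is in fact a trivial matrix bundle). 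The identification that makes your dictionary true is not the covering but a \emph{rescaling} isomorphism $\Lambda_\theta\cong\bZ^n$, and that is only available after normalizing $\theta$: the paper first puts $\theta$ into symplectic block form with diagonal blocks $p_i/q_i$ via $\theta\mapsto T\theta T^t$, $T\in GL(n,\bZ)$ (Bourbaki), decomposes $\cE_\theta$ as an external tensor product of rank-$q_i$ bundles on $2$-tori, and computes $c_1(\cE_\theta)=q\theta$ (up to sign) block by block using \Cref{le:rief}; your $\beta$-dictionary is the mod-$q$ reduction of this. Until that computation is carried out with the rescaling made explicit, the concluding sentence of your \Cref{item:th:rat.n.tori:tnm}$\Rightarrow$\Cref{item:th:rat.n.tori:tht} is an assertion rather than a proof, and it is the heart of the theorem.
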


The case $m=m'=1$ (a higher analogue of the torus branch of \cite[Theorem A(1)]{2508.04922v1}) thus recovers, with a somewhat different approach, a portion of \cite[Theorem, p.137]{dekr}. 


\subsection*{Acknowledgments}

I am grateful for valuable input from B. Badzioch, M. Khalkhali, H. Li and B. Passer.


\section{Chern-type topological invariants for algebra bundles}\label{se:app}

\emph{Bundles} (general \emph{fiber} or \emph{principal} \cite[\S 14.1]{td_alg-top}) are throughout to be understood in the sense of \cite[\S 3.1]{td_alg-top} or, say, \cite[Definitions 10.29]{hm5} (so are in particular locally trivial). A few brief reminders:
\begin{itemize}[wide]
\item A principal $\bG$-bundle is \emph{flat} \cite[\S I.2, Propositions 2.5 and 2.6]{kob_cplx} if the structure group $\bG$ can be \emph{reduced} (\cite[Definition 5.5.5]{hjjm_bdle}, \cite[post Example 14.1.15]{td_alg-top}) along
  \begin{equation*}
    \left(\bG,\ \text{discrete topology}\right)
    \xrightarrow{\quad}
    \left(\bG,\ \text{given topology}\right).
  \end{equation*}
  
\item A rank-$q$ vector bundle (or the attached \cite[Assertion 18.2.3]{hjjm_bdle} principal $GL(q,\bC)$- or $U(q)$-bundle) is \emph{projectively flat} \cite[\S I.2, post Proposition 2.6]{kob_cplx} if the associated \cite[Definition 5.3.1]{hjjm_bdle} principal bundle over
  \begin{equation*}
    PGL(q,\bC):=GL(q,\bC)/\bC^{\times}
    \quad\text{or}\quad
    PU(q):=U(q)/\left(\text{central }\bS^1\right)
  \end{equation*}
  is flat.
\end{itemize}
As customary, we write $B\bG$ for the \emph{classifying space} \cite[Definition 7.2.7]{hjjm_bdle} of a topological group $\bG$; in all cases of interest it will have the homotopy type of a CW-complex (e.g. because \cite[Theorem 5.1]{miln_univ-2} applies). 


In order to preserve compatibility with both \cite{hks_erg} and \cite{rief_canc} we will be working with rank-$q$ vector bundles, which we frequently assume \emph{Hermitian} \cite[\S 14.1]{ms_char} (so the structure group is $U(q)$). Per \cite[Remark 18.3.3]{hjjm_bdle}, this conflation of $BGL(q,\bC)$ with $BU(q)$ is harmless (as is that of $BPGL(q,\bC)$ with $BPU(q)$, for that matter):
\begin{equation*}
  U(q)\le GL(q,\bC)
  \quad\text{and}\quad
  PU(q)\le PGL(q,\bC)
\end{equation*}
are homotopy equivalences.


Note that the identification $H^2(\bT^2,\bZ)\cong \bZ$ requires an orientation on $\bT^2$ (i.e. \cite[preceding Proposition 3.3]{bt_forms} a nowhere-vanishing 2-form), so is not canonical. The approach to bundle classification adopted in \cite[\S 3]{rief_canc}, on the other hand, is different: the classifying invariant associated to $\cE$ there is an integer with no reference to orientation (though we will see one is implicit in the procedure). Paraphrasing, the procedure is as follows:

\begin{construction}\label{con:rieftw}
  Consider a rank-$q$ Hermitian vector bundle $\cE$ on the 2-torus $\bT^2$.

  \begin{enumerate}[(1),wide]

  \item\label{item:torquot} Identify $\bT^2$ with the quotient of $\bS^1\times\bR$ by the translation
    \begin{equation*}
      (s,t)\xmapsto{\quad}(s,t+1).
    \end{equation*}
    Equivalently, one can regard this as gluing $\bS^1\times I$, $I:=[0,1]$ by identifying the two boundary circles $\bS^1\times\{0\}$ and $\bS^1\times\{1\}$ in the obvious fashion.

  \item\label{item:trivesi} Trivialize $\cE$ over $\bS^1\times I$. This is always possible, given that
    \begin{itemize}[wide]
    \item bundles over $\bS^1\times I$ reduce, for classification purposes, to bundles over $\bS^1$ \cite[Theorems 11.4 and 11.5]{steen_fib};

    \item and bundles over $\bS^1$ are trivial in the present context because \cite[Corollary 18.6]{steen_fib} the structure group $U(q)$ is path-connected. 
    \end{itemize}

  \item The isomorphism class of $\cE$ will then be determined by the (homotopy class of the) map
    \begin{equation}\label{eq:s12uq}
      \bS^1\cong \bS^1\times\{0\}\cong \bS^1\times\{1\}\xrightarrow{\quad}U(q) 
    \end{equation}
    needed to identify the trivialized bundles on the two boundary circles upon gluing those circles back together.

  \item Because the segment
    \begin{equation*}
      \{1\}
      \cong
      \pi_1(SU(q))
      \xrightarrow{\quad}
      \pi_1(U(q))
      \xrightarrow{\ \pi_1(\det)\ }
      \bZ
      \cong
      \pi_1(\bS^1)
      \xrightarrow{\quad}
      \pi_0(SU(q))
      \cong
      \{1\}
    \end{equation*}
    of the long exact homotopy sequence (\cite[\S 17.3]{steen_fib}, \cite[Theorem 6.3.2]{td_alg-top}) attached to
    \begin{equation*}
      \{1\}
      \to
      SU(q)
      \xrightarrow{\quad}
      U(q)
      \xrightarrow{\ \det\ }
      \bS^1
      \to
      \{1\}
    \end{equation*}
    associates an integer to the map \Cref{eq:s12uq} {\it canonically}, we have our invariant: the {\it twist} of \cite[p.299]{rief_canc}.   
  \end{enumerate}
\end{construction}

\begin{remark}\label{re:autofact}
  Consider the matrix
  \begin{equation}\label{eq:rieff.n}
    N=N(s)=
    \begin{pmatrix}
      0 & 1 & 0 & \cdot & \cdot & 0\\
      0 & 0 & 1 & 0 &\cdot & 0 \\
      \cdot & \cdot & \cdot & \cdot & \cdot & \cdot \\
      0 & \cdot & \cdot & \cdot & \cdot & 1\\
      e(-as) & 0 & \cdot & \cdot & \cdot & 0
    \end{pmatrix}
    ,\quad
    e(\bullet)
    :=
    \exp(2\pi i \bullet)
  \end{equation}
  of \cite[text preceding Theorem 3.9]{rief_canc}. It allows us to recover the bundle $X(q,a)$ discussed there, of rank $q\in \bZ_{>0}$ and twist $-a\in \bZ$ \cite[Theorem 3.9]{rief_canc}, via the usual \cite[\S IV.7]{kob_cplx} {\it factor-of-automorphy} construction:
  \begin{equation*}
    \Gamma\cong \bZ^2\ni \gamma=(u,v)
    \xmapsto{\quad}
    \left(\bR^2\ni (s,t)=x\xmapsto{\quad N_{\gamma}\quad} N(s)^v\right)
  \end{equation*}
  is a factor of automorphy in the sense that it satisfies the {\it cocycle condition}
  \begin{equation*}
    N_{\gamma+\gamma'}(x) = N_{\gamma}(x+\gamma')\cdot N_{\gamma'}(x),\quad \forall \gamma,\gamma'\in \Gamma,\ x\in \bR^2.
  \end{equation*}
  One can substitute $x$ for $x+\gamma'$ on the right-hand side in this specific case, but the cocycle condition itself is that of \cite[\S IV.7, (7.2)]{kob_cplx} (or \cite[\S I.2]{mum_ab} for $q=1$, i.e. line bundles). The name (of the condition) is justified by the fact that
  \begin{equation*}
    \Gamma\ni \gamma
    \xmapsto{\quad}
    N_{\gamma}\in
    C(\bR^2,\ U(q)):=\mathrm{Cont}\left(\bR^2\xrightarrow{\quad}U(q)\right)    
  \end{equation*}
  is a non-abelian {\it 1-cocycle} for the action of $\Gamma$ on $C(\bR^2,\ U(q))$ by translation on the base. Conventions differ on what this means: to reconcile this picture with the notion of cocycle in \cite[\S 5.1]{ser_galcoh} one would have to consider
  \begin{equation*}
    \gamma\xmapsto{\quad}N_{\gamma}^{t}:=\text{transpose of }N_{\gamma}
  \end{equation*}
  instead (note the ordering difference between \cite[\S IV.7, (7.5)]{kob_cplx} and \cite[\S 5.1]{ser_galcoh} in defining cohomologous 1-cocycles). 

  One can now recover the (total space of the) bundle $X(q,a)$ from the 1-cocycle $(N_{\gamma})_{\gamma}$ as the quotient $\Gamma\backslash\bR^2\times \bC^q$ for the action
  \begin{equation*}
    \bR^2\times \bC^q\ni
    (x,v)
    \xmapsto{\quad\gamma\in\Gamma\quad}
    \left(x+\gamma,\ N_{\gamma}(x)v\right)
    \in \bR^2\times \bC^q.
  \end{equation*}
  The determinant\footnote{The claim on \cite[p.299]{rief_canc} that the $N$ of \Cref{eq:rieff.n} has determinant $(-1)^{q}e(-as)$ appears to be a typo: the sign is that of a permutation consisting of a single length-$q$ cycle, i.e. $(-1)^{q-1}$.}
  \begin{equation*}
    \left(\det N_{\gamma}\right)_{\gamma}
    =
    \left(\gamma\xrightarrow \det N_{\gamma}\right)
    =
    \left(\gamma=(u,v)\xmapsto{\ } (s,t)\xmapsto{\ }(-1)^{v(q-1)}e(-avs)\right)
    \in H^1(\Gamma,\bS^1)
  \end{equation*}
  is then a factor of automorphy defining the {\it determinant line bundle} $\bigwedge^q \cE$ \cite[\S IV.7, preceding (7.30)]{kob_cplx} (with `$\bigwedge^{\bullet}$' denoting exterior powers), so {\it its} resulting Chern class is the same as the original first Chern class $c_1(\cE)$ (e.g. \cite[Theorem 4.4.3 (IV)]{hirz_top}). These observations will be useful in the sequel. 
\end{remark}

Recall next that for tori
\begin{equation*}
  \bT^n\cong \bR^n/\left(\Gamma\cong \bZ^n\right)
\end{equation*}
we have {\it canonical} (\cite[Corollary 1.3.2]{bl_book}, \cite[\S I.1 (4)]{mum_ab}) identifications
\begin{equation}\label{eq:cohalt}
  H^p(\bT^n,\bZ)\cong \mathrm{Alt}^p(\Gamma,\bZ)\cong \bigwedge^p\mathrm{Hom}(\Gamma,\bZ)\cong \mathrm{Hom}(\bigwedge^p\Gamma,\bZ)
\end{equation}
where `$\mathrm{Alt}^{\bullet}$' denotes alternating (or skew-symmetric) multilinear forms and `$\bigwedge^{\bullet}$' again denotes exterior powers (the fact that \cite{bl_book,mum_ab} are concerned with {\it complex} tori makes no difference here). 

\begin{remark}\label{re:howcup}
  Even the canonical identifications \Cref{eq:cohalt} require some discussion of conventions (though these turn out not to make a difference to the resulting isomorphisms). \cite[Corollary 1.3.2]{bl_book} and \cite[\S I.1 (4)]{mum_ab} both proceed by first proving \Cref{eq:cohalt} for $p=1$ and then extending to the general case by taking {\it cup products} on both sides: on singular cohomology on the one hand, and multilinear forms on the other.
 
  There are two conventions in the literature for how such cup products are to be constructed (see for instance the discussion in \cite[Appendix C, Note on signs]{ms_char}). Focusing on singular cohomology, one might
  \begin{enumerate}[(a),wide]
  \item multiply two cocycles $f_p$ and $g_q$, of degrees $p$ and $q$ respectively, straightforwardly (e.g. \cite[\S 5.6]{spa_at}, \cite[\S 3.2]{hatcher}):
    \begin{equation*}
      (f_p g_q)(\text{simplex }\sigma)
      :=
      f_p(\text{front $p$-face of }\sigma)
      g_q(\text{back $q$-face of }\sigma)
    \end{equation*}
  \item or multiply them observing the {\it sign rule} \cite[\S 11.7.1]{td_alg-top} of always scaling by $(-1)^{st}$ whenever homogeneous symbols of respective degrees $s$ and $t$ are interchanged (\cite[\S A.5]{ms_char}, \cite[\S 17.6]{td_alg-top}, \cite[Definition VII.8.1]{dold_at-2e}):
    \begin{equation*}
      (f_p g_q)(\text{simplex }\sigma)
      :=
      (-1)^{pq}
      f_p(\text{front $p$-face of }\sigma)
      g_q(\text{back $q$-face of }\sigma).
    \end{equation*}
  \end{enumerate}
  The same dichotomy applies to exterior products of alternating forms, but so long as one is consistent about observing the sign rule (either on both sides of \Cref{eq:cohalt} or on neither side) \Cref{eq:cohalt} will be canonical. 

  We {\it will} follow the sign rule whenever the issue arises. 
\end{remark}

As for orientations:

\begin{definition}\label{def:rief-orient}
  In the context of \Cref{con:rieftw}, the {\it standard (or associated) orientation} is the one induced by the nowhere-vanishing form $ds\wedge dt$.
\end{definition}

\begin{lemma}\label{le:rief}
  Let $\cE$ be a rank-$q$ vector bundle $\cE$ on $\bT^2$ and write
  \begin{itemize}[wide]
  \item $\mathrm{tw}(\cE)\in \bZ$ for the twist of $\cE$ as defined on \cite[p.299]{rief_canc} or via \Cref{con:rieftw};
  \item $[\bT^2]\in H_2(\bT^2,\bZ)$ for the top homology class corresponding to the standard orientation of \Cref{def:rief-orient};
  \item and $c_1(\cE)\in H^2(\bT^2,\bZ)$ as usual, for the first Chern class of $\cE$.
  \end{itemize}
  We then have
  \begin{equation}\label{eq:twminus}
    \mathrm{tw}(\cE) = -c_1(\cE)[\bT^2]\in \bZ. 
  \end{equation}
\end{lemma}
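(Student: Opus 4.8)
The plan is to reduce everything to the determinant line bundle and then to a single explicit Chern--Weil computation. First I would observe that both sides of \Cref{eq:twminus} factor through $\bigwedge^q\cE$. On the right this is the standard equality $c_1(\cE)=c_1(\bigwedge^q\cE)$ already recalled in \Cref{re:autofact}. On the left, the clutching map $g\colon\bS^1\to U(q)$ of \Cref{con:rieftw} enters $\mathrm{tw}(\cE)$ only through $\det\circ g$, which is precisely the clutching map of the line bundle $\bigwedge^q\cE$ (and for a line bundle the determinant step is vacuous, since $U(1)=\bS^1$ and $SU(1)=\{1\}$). Hence $\mathrm{tw}(\cE)=\mathrm{tw}(\bigwedge^q\cE)$, and it suffices to treat the determinant line bundle.

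Next I would note that it is enough to verify \Cref{eq:twminus} on the one-parameter family $X(q,a)$, $a\in\bZ$, of \Cref{re:autofact}. Indeed, both sides of \Cref{eq:twminus} are integers depending only on the isomorphism class of $\cE$, and by \Cref{con:rieftw} together with $\pi_1(U(q))\cong\bZ$ the twist is a complete invariant of rank-$q$ bundles on $\bT^2$; since $\mathrm{tw}(X(q,a))=-a$ already ranges over all of $\bZ$, every such bundle is some $X(q,a)$. So the identity collapses to the single assertion $c_1(X(q,a))\,[\bT^2]=a$.

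For that assertion I would run a curvature computation directly from the determinant factor of automorphy $e_{(u,v)}(s,t)=(-1)^{v(q-1)}\,e(-avs)$ of \Cref{re:autofact}. A unitary connection $d+\alpha$ on the trivial bundle $\bR^2\times\bC$ descends to the quotient torus exactly when the imaginary-valued $1$-form $\alpha$ satisfies $\alpha(x+\gamma)-\alpha(x)=-d\log e_\gamma(x)$; one checks that $\alpha=2\pi i\,a\,t\,ds$ does the job, with $\Gamma$-invariant curvature $F=d\alpha=-2\pi i\,a\,ds\wedge dt$. Representing the first Chern class by $\tfrac{i}{2\pi}F$ and integrating against the standard orientation of \Cref{def:rief-orient} (so that $\int_{\bT^2}ds\wedge dt=1$) yields $c_1(X(q,a))[\bT^2]=a$, whence $\mathrm{tw}(X(q,a))=-a=-c_1(X(q,a))[\bT^2]$, as required.

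The hard part is not any single step but the cumulative sign and orientation bookkeeping: the normalization $\tfrac{i}{2\pi}F$ for $c_1$, the orientation attached to $ds\wedge dt$, the direction in which the clutching map glues $\bS^1\times\{1\}$ to $\bS^1\times\{0\}$, and Rieffel's sign in $\mathrm{tw}(X(q,a))=-a$ must all be pinned down consistently (and compatibly with the sign-rule choice of \Cref{re:howcup}); it is the product of these conventions that produces the lone minus sign in \Cref{eq:twminus}. As a cross-check I would also compute $\det\circ g$ at $t=0$ for $X(q,a)$ directly, obtaining $s\mapsto(-1)^{q-1}e(-as)$ of winding number $-a$, which independently reconfirms both the reduction to the determinant and the sign.
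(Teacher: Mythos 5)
Your proof is correct, and it reaches the same sign through a genuinely different computation of $c_1$. Both arguments share the two reductions (every rank-$q$ bundle on $\bT^2$ is some $X(q,a)$, and both $\mathrm{tw}$ and $c_1$ factor through the determinant line bundle $\bigwedge^q\cE$), but where the paper evaluates $c_1$ via the Mumford--Birkenhake--Lange formula expressing the Chern class of a factor of automorphy $\gamma\mapsto e(f_\gamma)$ as the alternating $2$-form $(\gamma_1,\gamma_2)\mapsto \left(f_{\gamma_2}(x+\gamma_1)-f_{\gamma_2}(x)\right)-\left(f_{\gamma_1}(x+\gamma_2)-f_{\gamma_1}(x)\right)$ on the lattice --- and must then invoke the cup-product sign conventions of \Cref{re:howcup} to convert the value $-a$ on $\left((1,0),(0,1)\right)$ into the pairing $c_1[\bT^2]=a$ --- you instead run a Chern--Weil computation with the explicit connection form $\alpha=2\pi i\,a\,t\,ds$, whose descent condition and curvature $F=-2\pi i\,a\,ds\wedge dt$ check out, giving $c_1[\bT^2]=\int_{\bT^2}\frac{i}{2\pi}F=a$ directly. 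Your route has the advantage of making the orientation-dependence completely explicit (it is literally the choice of $\int ds\wedge dt=1$) and of bypassing the alternating-form/cup-product sign bookkeeping; the paper's route stays inside the factor-of-automorphy formalism and the identification \Cref{eq:cohalt}, which it reuses later (notably in the proof of \Cref{th:rat.n.tori}, where $c_1(\cE_\theta)$ is read off as a skew form on $\bZ^n$). Your closing cross-check that the clutching map of $X(q,a)$ has determinant $s\mapsto(-1)^{q-1}e(-as)$ of winding number $-a$ is exactly the content of Rieffel's computation $\mathrm{tw}(X(q,a))=-a$ cited in \Cref{re:autofact}, so everything is consistent.
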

\begin{proof}
  it will be enough to consider the bundles $X(q,a)$ of \cite[\S 3]{rief_canc}, as these are cover all isomorphism classes of rank-$q$ bundles on $\bT^2$ (as follows from \cite[discussion preceding Theorem 3.9]{rief_canc}).

  We saw in \Cref{re:autofact} that $c_1(\cE)=c_1\left(\bigwedge^q\cE\right)$ is computable from the factor of automorphy defined uniquely by 
  \begin{equation*}
    (1,0)\xmapsto{\quad} 1
    ,\quad
    (0,1)\xmapsto{\quad} (-1)^{q-1}e(-as).
  \end{equation*}
  In general, for a factor of automorphy given by $\gamma\xmapsto{} e(f_{\gamma})$, the corresponding Chern class, regarded as an alternating 2-form on $\Gamma$, is (\cite[\S I.2, Proposition]{mum_ab} or \cite[Theorem 2.1.2]{bl_book})
  \begin{equation*}
    (\gamma_1,\ \gamma_2)
    \xmapsto{\quad}
    \left(f_{\gamma_2}(x+\gamma_1) - f_{\gamma_2}(x)\right)
    -
    \left(f_{\gamma_1}(x+\gamma_2) - f_{\gamma_1}(x)\right)
  \end{equation*}
  (for any $x\in \bR^2$). 

  In our case this would be the skew-symmetric bilinear form on $\bZ^2$ defined uniquely by
  \begin{equation*}
    \left((1,0),\ (0,1)\right)
    \xmapsto{\quad}
    -a
    =
    \text{Rieffel's twist of \cite[p.299]{rief_canc}}.
  \end{equation*}
  Because of our sign convention though (\Cref{re:howcup}), for a skew-symmetric bilinear form $E$ on $\Gamma$ representing the same cohomology class as a differential form $\omega$ on the torus, $E\left((1,0),(0,1)\right)$ will be {\it minus} the scalar multiple $k$ in
  \begin{equation*}
    \omega\text{ cohomologous with }k\cdot ds\wedge dt.
  \end{equation*}
\end{proof}

\begin{remarks}\label{res:when+}
  \begin{enumerate}[(1),wide]
  \item \Cref{eq:twminus} and the proof of \Cref{le:rief} are compatible with the standard positivity conventions: focusing on the lattice
    \begin{equation*}
      \Gamma:=\bZ\oplus \bZ i\subset \bC^2\cong \bR^2,
    \end{equation*}
    the line bundle with Chern class $E$ (a skew-symmetric bilinear form on $\Gamma$) can have non-zero holomorphic sections only if 
    \begin{equation*}
      H(x,y):=E(ix,y)+iE(x,y)
    \end{equation*}
    is positive-definite \cite[\S I.3, preceding Proposition]{mum_ab}. This translates to
    \begin{equation*}
      E((0,1),\ (1,0)) = E(i,1)>0,
    \end{equation*}
    which in turn corresponds to the positivity of the degree $c_1[\bT^2]$ (since non-trivial line bundles on elliptic curves have non-zero (holomorphic) sections if and only if they have {\it positive} degree \cite[Lemma 4.1 and \S 2.3]{daly-r2}). For that reason, the degree and $E((1,0),\ (0,1))$ will have {\it differing} signs.

  \item\label{item:needorient} As \Cref{def:rief-orient} and \Cref{eq:twminus} both suggest, the twist does depend on the orientation. Consider, for instance, the case of a line bundle $X(1,a)$, as in \cite[Notation 3.7]{rief_canc}. Its sections are the continuous functions
    \begin{equation*}
      h\in C(\bR^2,\bC),\quad h(s+1,t)=h,\quad h(s,t+1)=e(-as)h(s,t).
    \end{equation*}
    The functions
    \begin{equation*}
      (s,t)\xmapsto{\quad}h(s,t)\cdot e(ast)
    \end{equation*}
    are then sections of an isomorphic line bundle, with the roles of $s$ and $t$ interchanged (hence the change of orientation) and $a$ replaced with $-a$.

    In short: the twist can be regarded as a line-bundle invariant only after we have fixed an orientation.
  \end{enumerate}
\end{remarks}

We will also connect the preceding discussion to the invariant \Cref{eq:omegaa} attached to $q\times q$ matrix bundles $\cA$ over $\bT^2$ (equivalently \cite[Assertion 18.2.4]{hjjm_bdle}: $PU(q)$-bundles) on \cite[\S 3, pp.5-6]{hks_erg}. Purposely rephrased so as to make the analogy to \Cref{con:rieftw} plain, the procedure reads as follows.

\begin{construction}\label{con:hksw}
  Consider a $q\times q$ matrix-algebra bundle $\cA$ on the 2-torus $\bT^2$.

  \begin{enumerate}[(1),wide]
  \item As in \Cref{con:rieftw}\Cref{item:torquot}, identify $\bT^2$ with a quotient of $\bS^1\times I$, $I:=[0,1]$ by identifying the two boundary circles.

  \item Trivialize $\cA$ over $\bS^1\times I$; this is possible for the same reason it was in \Cref{con:rieftw}\Cref{item:trivesi}: the structure group $PU(q)$ is path-connected.

  \item The isomorphism class of $\cA$ will then be determined by the (homotopy class of the) map
    \begin{equation}\label{eq:s12puq}
      \bS^1\cong \bS^1\times\{0\}\cong \bS^1\times\{1\}\xrightarrow{\quad}PU(q) 
    \end{equation}
    needed to identify the trivialized bundles on the two boundary circles upon gluing those circles back together.

  \item The segment
    \begin{equation*}
      \{1\}
      \cong
      \pi_1(SU(q))
      \xrightarrow{\quad}
      \pi_1(PU(q))
      \xrightarrow[\cong]{\quad}
      \mu_q
      \cong
      \pi_0(\mu_q)
      \xrightarrow{\quad}
      \pi_0(SU(q))
      \cong
      \{1\}
    \end{equation*}
    of the long exact homotopy sequence (\cite[\S 17.3]{steen_fib}, \cite[Theorem 6.3.2]{td_alg-top}) attached to
    \begin{equation*}
      \{1\}
      \to
      \mu_q
      \xrightarrow{\quad}
      SU(q)
      \xrightarrow{\quad}
      PU(q)
      \to
      \{1\}
    \end{equation*}
    associates an element of the $\mu_q$ of \Cref{eq:muq} canonically to the map \Cref{eq:s12puq}: $\omega(\cA)$, by definition (\cite[immediately preceding Lemma 3.3]{hks_erg}).
  \end{enumerate}
  Note that \cite[\S 3]{hks_erg} refers to $\mu_q$ as $\bZ_q$, suggesting $\bZ/q$, but the group being used is in fact \Cref{eq:muq}: no specific primitive root of unity is singled out, so no isomorphism $\mu_q\cong \bZ/q$ is chosen (see also \Cref{re:munclass}). 
\end{construction}

\begin{remark}\label{re:diff.orient}
  Once more an orientation is implicit in \Cref{con:hksw}, as explained in \Cref{res:when+}\Cref{item:needorient}. For that matter, the present discussion alters the original definition of $\omega(\bullet)$ in just this respect, in order to implement compatibility with \Cref{con:rieftw}:
  \begin{itemize}[wide]
  \item \cite[pp.5-6]{hks_erg} trivialize the bundle on $I\times \bS^1$ rather than $\bS^1\times I$;

  \item and then take for $\omega$ the {\it inverse} of the resulting element in $\pi_1(PU(q))\cong \mu_q$. 
  \end{itemize}
  As \Cref{res:when+}\Cref{item:needorient} indicates, the coordinate role-reversal precisely accounts for the sign difference/inversion. 
\end{remark}

A comparison of \Cref{con:rieftw} and \Cref{con:hksw} immediately proves

\begin{proposition}\label{pr:2constr}
  Fix an orientation on $\bT^2$. For a rank-$q$ vector bundle $\cE$ on $\bT^2$ the invariant
  \begin{equation*}
    \omega(\cE\otimes \cE^*)\in \mu_q\cong \pi_1(PU(q))
  \end{equation*}
  is the image of 
  \begin{equation*}
    \mathrm{tw}(\cE)\in \bZ\cong \pi_1(U(q))
  \end{equation*}
  through the map $\pi_1(U(q))\to \pi_1(PU(q))$ induced by the quotient $U(q)\to PU(q)$.  \qedhere
\end{proposition}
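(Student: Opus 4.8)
The plan is to run \Cref{con:rieftw} and \Cref{con:hksw} in parallel, on the vector bundle $\cE$ and its endomorphism bundle respectively, and to observe that the resulting clutching data differ only by the projection $\pi\colon U(q)\to PU(q)$. The bundle-theoretic input I would record first is that $\cE\otimes\cE^*\cong \End(\cE)$, and that the principal $PU(q)$-bundle attached to the matrix bundle $\End(\cE)$ is obtained from the principal $U(q)$-bundle of $\cE$ by pushing forward along the conjugation homomorphism $U(q)\to \Aut(M_q(\bC))$, which factors through $\pi$ since scalars act trivially by conjugation. Consequently any $U(q)$-valued clutching datum for $\cE$ produces, upon post-composition with $\pi$, a $PU(q)$-valued clutching datum for $\End(\cE)$ compatible with the induced trivializations.

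Having fixed the orientation, I would then use the \emph{same} presentation of $\bT^2$ as $\bS^1\times I$ glued along its boundary circles in both constructions; this is exactly the normalization arranged in \Cref{re:diff.orient}, which reconciles the $I\times\bS^1$ trivialization of \cite{hks_erg} with the $\bS^1\times I$ trivialization of \Cref{con:rieftw}. Trivializing $\cE$ over $\bS^1\times I$ as in \Cref{con:rieftw}\Cref{item:trivesi} produces a clutching map $g\colon \bS^1\to U(q)$ with $\mathrm{tw}(\cE)=[g]\in\pi_1(U(q))$, the identification $\pi_1(U(q))\cong\bZ$ via $\pi_1(\det)$ being an isomorphism by the exact sequence displayed in \Cref{con:rieftw}. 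Applying $\End$ (equivalently $\Ad$) to this trivialization trivializes $\End(\cE)$ over $\bS^1\times I$, and by the functoriality just noted its clutching map is precisely $\bar g:=\pi\circ g\colon \bS^1\to PU(q)$; \Cref{con:hksw} then reads off $\omega(\cE\otimes\cE^*)=[\bar g]\in\pi_1(PU(q))$, using the isomorphism $\pi_1(PU(q))\cong\mu_q$ of that construction.

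Since $\bar g=\pi\circ g$ forces $[\bar g]=\pi_*[g]$ in $\pi_1(PU(q))$, the relations $\mathrm{tw}(\cE)=[g]$ and $\omega(\cE\otimes\cE^*)=[\bar g]$ combine to give $\omega(\cE\otimes\cE^*)=\pi_*(\mathrm{tw}(\cE))$, which is the assertion. The only genuine subtlety---and the step I would treat most carefully---is the bookkeeping of orientations and trivialization conventions: one must ensure the two constructions use literally the same $\bS^1\times I$ decomposition and base circle, so that the two clutching functions are related by $\pi$ on the nose rather than by $\pi$ composed with an inversion. This is precisely what the convention change in \Cref{re:diff.orient} secures, and the standing hypothesis that an orientation has been fixed is what renders both $\mathrm{tw}$ and $\omega$ well-defined in the first place.
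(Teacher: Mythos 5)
Your proposal is correct and is exactly the argument the paper intends: the paper offers no separate proof, stating only that ``a comparison of \Cref{con:rieftw} and \Cref{con:hksw} immediately proves'' the proposition, and your write-up simply makes that comparison explicit (same $\bS^1\times I$ decomposition, clutching map of $\End(\cE)$ equal to $\pi\circ g$, hence $\omega=\pi_*(\mathrm{tw})$). Your attention to the orientation normalization of \Cref{re:diff.orient} is precisely the point the paper flags as the only subtlety.
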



\begin{remark}\label{re:munclass}
  As the discussion in \Cref{con:hksw} anticipates, the 2-class
  \begin{equation*}
    \beta(\cA)=\beta_q(\cA)\in H^2(X,\bZ/q)
  \end{equation*}
  of a $q\times q$ matrix bundle $\cA$ discussed in \cite[Remark 1.6(4)]{2508.04922v1} is more appropriately thought of as lying in $H^2(X,\mu_q)$ instead (with $\mu_q$ as in \Cref{eq:muq}). The map associating $\beta$ to the bundle $\cA$ can then be identified with \Cref{eq:conn.coh} and in this interpretation, the cohomological counterpart to
  \begin{equation*}
    \cE\xmapsto{\quad}\cA:=\cE nd(\cE)\cong \cE\otimes \cE^*,
  \end{equation*}
  mapping the Chern class $c_1(\cE)$ to $\beta(\cA)$, is
  \begin{equation}\label{eq:z2muq}
    H^2(X,\bZ)\cong H^1(X,\cC_X^{*})\xrightarrow{\quad} H^2(X,\mu_q),
  \end{equation}
  where
  \begin{itemize}[wide]
  \item $\cC_X^*$ is the sheaf of nowhere-vanishing continuous complex-valued functions on $X$;
  \item the first isomorphism is the usual \cite[Remark 10.1.4]{hjjm_bdle} identification of the {\it Picard group} with $H^2(-,\bZ)$ via the first Chern class (also \cite[p.49, bottom]{groth_brau-1} for $i=1$);
  \item and the rightward map is part of the long exact cohomology sequence attached to
    \begin{equation*}
      1\xrightarrow{} \mu(q,\cC_X)
      \xrightarrow{\quad}
      \cC_X^*
      \xrightarrow{\quad\text{$q^{th}$ power}\quad}
      \cC_X^*
      \xrightarrow{}
      1
    \end{equation*}
    (labeled $\braket{n}$ in \cite[Remark 19.6.4]{hjjm_bdle}). 
  \end{itemize}
  This version of the map $c_1(\cE)\xmapsto{} \beta(\cE\otimes \cE^*)$ discussed in \cite[Remark 1.6(3)]{2508.04922v1} does not require a choice of generator for $\mu_q$: that choice implements an isomorphism $\mu_q\cong \bZ/q$, which the present discussion obviates.
\end{remark}

\Cref{le:rief} and \Cref{pr:2constr} now combine to identify the invariant $\omega$.

\begin{proposition}\label{pr:whatisomega}
  Let $\cA\xrightarrowdbl{} \bT^2$ be a $q\times q$ matrix bundle on $\bT^2$ and write
  \begin{itemize}[wide]
  \item $\omega(\cA)\in \mu_q$ for the invariant defined on \cite[pp.5-6]{hks_erg} or via \Cref{con:hksw};
  \item $[\bT^2]\in H_2(\bT^2,\bZ)$ for the top homology class corresponding to the standard orientation of \Cref{def:rief-orient};
  \item and $\beta(\cA)\in H^2(\bT^2,\mu_q)$ for the 2-class of the matrix bundle $\cA$ of \Cref{eq:muq} and/or \cite[\S 18.3.7]{hjjm_bdle}.
  \end{itemize}
  We then have
  \begin{equation*}
    \omega(\cA) = \beta(\cA)[\bT^2]^{-1}\in \mu_q. 
  \end{equation*}
\end{proposition}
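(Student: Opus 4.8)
The plan is to reduce the claim to the two results it is advertised to combine, \Cref{le:rief} and \Cref{pr:2constr}, after first realizing $\cA$ as an endomorphism bundle. Since $\bT^2$ is a $2$-dimensional manifold, $H^3(\bT^2,\bZ)=0$; via the exponential sequence this group is isomorphic to $H^2$ of the sheaf of continuous $\bS^1$-valued functions, so the connecting map on $H^1(\bT^2,PU(q))$ attached to the central extension $1\to \bS^1\to U(q)\to PU(q)\to 1$ vanishes. Every $PU(q)$-bundle on $\bT^2$ therefore lifts to a $U(q)$-bundle, i.e. $\cA\cong \cE\otimes \cE^*$ for some rank-$q$ vector bundle $\cE$, and it suffices to establish the identity for such $\cA$.

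With $\cA=\cE\otimes\cE^*$ fixed, \Cref{pr:2constr} expresses $\omega(\cA)$ as the image $\iota_*\bigl(\mathrm{tw}(\cE)\bigr)$ of the twist under the map $\iota_*\colon\pi_1(U(q))\to\pi_1(PU(q))$, while \Cref{le:rief} gives $\mathrm{tw}(\cE)=-c_1(\cE)[\bT^2]$. Combining the two yields $\omega(\cA)=\iota_*\bigl(-c_1(\cE)[\bT^2]\bigr)$. It then remains only to identify $\beta(\cA)[\bT^2]$ with $\iota_*\bigl(c_1(\cE)[\bT^2]\bigr)$, after which the asserted equality follows from $\iota_*(-n)=\iota_*(n)^{-1}$ in the multiplicative group $\mu_q$: the minus sign supplied by \Cref{le:rief} becomes the inversion in the conclusion.

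The crux is a coefficient-level bookkeeping, namely checking that the two a priori different descriptions of the reduction $\bZ\to\mu_q$ agree. On one side, $\iota_*\colon\pi_1(U(q))\cong\bZ\to\pi_1(PU(q))\cong\mu_q$ is, by the standard computation with the loops $t\mapsto\mathrm{diag}(e^{2\pi i t},1,\dots,1)$ and $t\mapsto e^{2\pi i t}I$, exactly the surjection $\bZ\twoheadrightarrow\bZ/q\cong\mu_q$ (the scalar loop represents $q\in\pi_1(U(q))$ and dies in $PU(q)$). On the other, \Cref{re:munclass} identifies $\beta(\cA)=\beta(\cE\otimes\cE^*)$ with the image of $c_1(\cE)$ under \Cref{eq:z2muq}, whose effect on coefficients is the same reduction $\bZ\to\mu_q$ arising from $1\to\mu(q,\cC_X)\to\cC_X^*\to\cC_X^*\to 1$. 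Because evaluation against the fundamental class $[\bT^2]$ is natural in the coefficient group, pairing commutes with these homomorphisms and gives $\beta(\cA)[\bT^2]=\iota_*\bigl(c_1(\cE)[\bT^2]\bigr)$, as required.

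I expect the main obstacle to be precisely this matching of signs and orientations rather than any deep new input. Both $\omega$ (via \Cref{con:hksw}, as modified in \Cref{re:diff.orient}) and the twist (via \Cref{con:rieftw}) carry an implicit orientation dependence, and \Cref{le:rief} already absorbs one sign. One must verify that, with the single standard orientation of \Cref{def:rief-orient} fixed throughout and the two identifications $\pi_1(PU(q))\cong\mu_q$ and $\mu_q\subset\bS^1$ chosen compatibly, these conventions conspire to produce the inverse $\beta(\cA)[\bT^2]^{-1}$ and not $\beta(\cA)[\bT^2]$; this is the only point at which a sign error could silently enter.
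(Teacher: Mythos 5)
Your proposal is correct and follows essentially the same route as the paper: reduce to $\cA\cong\cE\otimes\cE^*$ by observing the lifting obstruction in $H^3$ vanishes (the paper cites torsion-freeness of $H^3(\bT^n,\bZ)$, which for $\bT^2$ amounts to your $H^3(\bT^2,\bZ)=0$), then combine \Cref{le:rief} and \Cref{pr:2constr} via the coefficient identification of \Cref{re:munclass}. Your extra care with the sign/inversion bookkeeping is consistent with the conventions fixed in \Cref{re:diff.orient} and \Cref{def:rief-orient}, so no gap remains.
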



\begin{proof}
  \Cref{le:rief} and \Cref{pr:2constr} (via \Cref{re:munclass}, concerning the relationship between Chern and $\beta$ classes) confirm this for $\cA\cong \cE\otimes \cE^*$. The cohomology $H^3(\bT^n,\bZ)\cong\bZ^{\oplus \binom{n}{3}}$ being torsion-free, all matrix bundles on tori are of the form $\cE\otimes \cE^*$ \cite[Remark 18.3.8]{hjjm_bdle}.
\end{proof}

\begin{remark}\label{re:c1.beta}
  The foregoing discussion takes it for granted that \Cref{eq:z2muq} indeed maps $c_1(\cE)$ to the class $\beta(\cE\otimes \cE^*)$ of \Cref{eq:muq}. This is the gist of \cite[Remark 1.6(3)]{2508.04922v1} and also follows (casting the discussion in classifying-space rather than sheaf language) from a diagram chase around (the commutative)
  \begin{equation*}
    \begin{tikzpicture}[>=stealth,auto,baseline=(current  bounding  box.center)]
      \path[anchor=base] 
      (0,0) node (l) {$BSU(q)$}
      +(4,.5) node (u) {$BU(q)$}
      +(4,-.5) node (d) {$BPU(q)$}
      +(8,0) node (r) {$B^2\mu_q$}
      +(0,1) node (ul) {$B\bS^1 \times BSU(q)$}
      +(6,1) node (ur) {$B\bS^1$}
      +(3,1.5) node (uu) {$B\bS^1$}
      +(1,2.2) node (uul) {$B\mu_q$}
      ;
      \draw[densely dotted,->] (l) to[bend left=6] node[pos=.5,auto] {$\scriptstyle $} (u);
      \draw[dashed,->] (u) to[bend left=6] node[pos=.5,auto] {$\scriptstyle $} (r);
      \draw[->] (l) to[bend right=6] node[pos=.5,auto,swap] {$\scriptstyle $} (d);
      \draw[->] (d) to[bend right=6] node[pos=.5,auto,swap] {$\scriptstyle $} (r);
      \draw[densely dotted,->] (u) to[bend left=6] node[pos=.5,auto] {$\scriptstyle B\det$} (ur);
      \draw[->] (ul) to[bend left=0] node[pos=.5,auto] {$\scriptstyle$} (l);
      \draw[dashed,->] (ul) to[bend left=6] node[pos=.5,auto] {$\scriptstyle$} (u);
      \draw[->] (ul) to[bend left=6] node[pos=.5,auto] {$\scriptstyle$} (uu);
      \draw[decorate,decoration={coil,aspect=0},->] (uu) to[bend left=16] node[pos=.5,auto] {$\scriptstyle B\left(-\right)^q$} (ur);
      \draw[decorate,decoration={coil,aspect=0},->] (ur) to[bend left=6] node[pos=.5,auto] {$\scriptstyle$} (r);
      \draw[->] (u) to[bend left=0] node[pos=.5,auto] {$\scriptstyle $} (d);
      \draw[decorate,decoration={coil,aspect=0},->] (uul) to[bend right=6] node[pos=.5,auto] {$\scriptstyle $} (uu);
    \end{tikzpicture}
  \end{equation*}
  where
  \begin{itemize}[wide]
  \item the left-hand $B\bS^1\to BU(q)$ \emph{deloops} \cite[post Theorem 8.22]{dk_at_2001} the central inclusion $\bS^1\le U(q)$;

  \item and the bottom (solid), dashed, dotted and squiggly paths are all \emph{fibration sequences} in the sense of \cite[\S 6.11]{dk_at_2001}. 
  \end{itemize}
\end{remark}

Turning to the higher-torus/pro-torus setup outlined in the Introduction, recall that flat {\it real} vector bundles on $\bT^n$ are
\begin{itemize}[wide]
\item sums of (real) line bundles \cite[Theorem 3.2]{as_vbtor};
\item uniquely determined \cite[Theorem 3.3]{as_vbtor} up to isomorphism by their first two {\it Stiefel-Whitney classes} \cite[Definition 10.3.7]{hjjm_bdle} (living in $H^i(-,\bZ/2)$, $i=1,2$).
\end{itemize}

The picture is even simpler for {\it complex} vector bundles, essentially because $GL(n,\bC)$ is connected (whereas $GL(n,\bR)$ is not): with only straightforward modifications, the proof of \cite[Theorem 3.2]{as_vbtor} also yields

\begin{lemma}\label{le:flatistriv}
  A flat complex vector bundle on a torus is trivial.  \qedhere
\end{lemma}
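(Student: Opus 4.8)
The plan is to pass to holonomy and deform the resulting representation to the trivial one through representations, exploiting that commuting complex matrices are simultaneously triangularizable and that $\bC^{\times}$ is connected. Since $\bT^n$ is aspherical with $\pi_1(\bT^n)\cong\bZ^n$, a flat rank-$q$ complex bundle $\cE$ is, up to isomorphism of flat bundles, the bundle $\bR^n\times_{\rho}\bC^q$ associated to a holonomy representation $\rho\colon\bZ^n\to GL(q,\bC)$, i.e.\ to an $n$-tuple of commuting invertible matrices $A_1,\dots,A_n$. The general principle I would lean on is that the underlying topological bundle is invariant under homotopies of $\rho$ \emph{through representations}: a continuous path $u\mapsto\rho_u$ in $\Hom(\bZ^n,GL(q,\bC))$ assembles into a genuine vector bundle on $\bT^n\times[0,1]$ (take $\bR^n\times[0,1]\times\bC^q$ and quotient by the $\bZ^n$-action $m\cdot(x,u,v)=(x+m,u,\rho_u(m)v)$), so homotopy invariance of bundles forces all the slices to be isomorphic. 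It therefore suffices to connect $\rho$ to the trivial representation inside $\Hom(\bZ^n,GL(q,\bC))$.

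First I would simultaneously upper-triangularize. Over the algebraically closed field $\bC$ the commuting family $\{A_i\}$ has a common eigenvector, and an evident induction on $q$ puts all the $A_i$ into upper-triangular form after a single change of basis; replacing $\rho$ by this conjugate changes $\cE$ only by an isomorphism. I would then deform the strictly-upper-triangular part away by conjugating with $Q_u:=\mathrm{diag}(u^{q-1},u^{q-2},\dots,u,1)$: for $u\in(0,1]$ the representation $\rho_u:=Q_u\,\rho\,Q_u^{-1}$ is conjugate to $\rho$, its $(i,j)$ entry equals $u^{\,j-i}$ times that of $\rho$, and as $u\to0^{+}$ every strictly-upper-triangular entry is killed while the diagonal is left untouched. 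The entries being polynomial in $u$, the path extends continuously to $u=0$, where it lands on the diagonal representation $D\colon m\mapsto\mathrm{diag}\big(\chi_1(m),\dots,\chi_q(m)\big)$; each $\chi_i\colon\bZ^n\to\bC^{\times}$ is a character (the diagonal of a product of upper-triangular matrices is the product of the diagonals, so $D$ is genuinely a homomorphism). Throughout $u\in[0,1]$ the matrices stay upper-triangular with nonzero diagonal $\chi_i(m)$, hence invertible, so this is a legitimate path in $\Hom(\bZ^n,GL(q,\bC))$.

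Finally I would contract the diagonal. Each character $\chi_i$ is a point of $\Hom(\bZ^n,\bC^{\times})\cong(\bC^{\times})^n$, which is path-connected because $\bC^{\times}$ is; choosing paths from each $\chi_i$ to the trivial character simultaneously yields a homotopy of diagonal representations from $D$ to the identity. Concatenating the three stages exhibits $\rho$ as homotopic, through representations, to the trivial representation, whence $\cE$ is isomorphic to the trivial bundle. This also realizes $\cE$ as a sum of flat line bundles in the spirit of \cite[Theorem 3.2]{as_vbtor}, each trivial because $\Hom(\bZ^n,\bC^{\times})$ is connected---exactly the point at which the complex case improves on the real one, where $\Hom(\bZ^n,\{\pm1\})$ is discrete and nontrivial.

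The only genuinely load-bearing step is the homotopy-invariance principle of the first paragraph, together with continuity of the triangularization path at $u=0$; both are routine once set up. The substantive input---simultaneous triangulability over $\bC$ and connectedness of $\bC^{\times}$, equivalently of $GL(q,\bC)$---is precisely what is unavailable over $\bR$, and is what makes the statement fail for real flat line bundles.
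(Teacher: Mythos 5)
Your argument is correct, and it is essentially the proof the paper intends: the paper offers no independent argument, deferring to the proof of \cite[Theorem 3.2]{as_vbtor} ``with only straightforward modifications,'' and your holonomy-plus-deformation scheme (simultaneous triangularization, killing the strictly upper-triangular part by conjugation, then contracting the diagonal characters via connectedness of $\Hom(\bZ^n,\bC^{\times})$) is precisely that modification spelled out.
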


\begin{remark}  \label{re:justbdle}
  As has become customary by now in the present work, \Cref{le:flatistriv} refers to flat bundles being trivial as {\it just} bundles (not as {\it flat} bundles). Or: their structure group restricts along the continuous map
  \begin{equation*}
    \left(U(q),\ \text{discrete topology}\right)
    \xrightarrow{\quad}
    \left(U(q),\ \text{usual topology}\right),
  \end{equation*}
  but are trivial as $U(q)$-bundles for the {\it latter} (weaker) topology.
  
  This will be common practice henceforth: when speaking of isomorphic (projectively) flat bundles, we always mean (unless specified otherwise) isomorphisms as {\it plain} bundles. 
\end{remark}

\Cref{th:protoriok}\Cref{item:projflc1} of course strengthens \Cref{le:flatistriv}. Before turning to that proof, recalling that the torus $\bT^n$ is the classifying space $B\bZ^n$ of \cite[Definition 7.2.7]{hjjm_bdle}, note the contrast to classifying spaces of {\it finite} groups:

\begin{proposition}\label{pr:bgfing}
  \begin{enumerate}[(1)]
  \item\label{item:cpctlie} For a compact Lie group $\Gamma$ the complex rank-$q$ vector bundle on $B\Gamma$ corresponding to a non-trivial unitary representation $\rho:\Gamma\to U(q)$ is non-trivial.

  \item\label{item:hencefin} In particular, if $\Gamma$ is finite, every non-trivial $\rho$ gives rise to a non-trivial flat vector bundle on $B\Gamma$.
  \end{enumerate}
\end{proposition}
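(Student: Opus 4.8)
The plan is to detect non-triviality not in ordinary cohomology --- where it provably fails, e.g. $\rho=\chi^{\oplus 2}$ for the order-two character $\chi$ of $\bZ/4$ gives a bundle on $B(\bZ/4)$ with vanishing integral Chern classes and vanishing Stiefel--Whitney classes --- but in complex $K$-theory, through the representation ring. Since triviality of $E_\rho$ as a (plain) bundle forces triviality of its class in $\widetilde{K}^0(B\Gamma)$, it is enough to show $[E_\rho]\ne q\cdot[\underline{\bC}]$ in $K^0(B\Gamma)$.

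First I would identify the bundle class. The assignment $\rho\mapsto E_\rho$ is the geometric shadow of the natural ring homomorphism $R(\Gamma)\to K^0(B\Gamma)$ out of the complex representation ring, carrying $[\rho]$ to $[E_\rho]$ and $q\cdot 1$ to the trivial rank-$q$ class. As $R(\Gamma)$ is free on the isomorphism classes of irreducibles, $[\rho]=q\cdot 1$ holds in $R(\Gamma)$ exactly when $\rho$ is trivial; hence for non-trivial $\rho$ the difference $[\rho]-q\cdot 1$ is a non-zero element of the augmentation ideal $I:=\ker\big(\dim\colon R(\Gamma)\to\bZ\big)$. Everything thus reduces to showing this non-zero element has non-zero image in $K^0(B\Gamma)$.

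Here I would invoke the Atiyah--Segal completion theorem, identifying $K^0(B\Gamma)$ with the $I$-adic completion $\widehat{R(\Gamma)}_I$, so that $\rho\mapsto E_\rho$ becomes the completion map $R(\Gamma)\to\widehat{R(\Gamma)}_I$ with kernel $\bigcap_n I^n$. The claim is then precisely that this intersection vanishes, i.e. that $R(\Gamma)$ is $I$-adically separated. For $\Gamma$ connected this is easy: restriction to a maximal torus $T$ gives an injection $R(\Gamma)\hookrightarrow R(T)$ compatible with the completion maps, and $R(T)\cong\bZ[\widehat{T}]$ is a Laurent-polynomial domain in which Krull's intersection theorem gives $\bigcap_n I^n=0$; concretely, a non-trivial $\rho|_T$ already has non-trivial total Chern class $\prod_i\big(1+c_1(L_{\chi_i})\big)$, a non-constant element of the polynomial domain $H^*(BT;\bZ)$ as soon as some weight $\chi_i$ is non-trivial. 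The general compact group then reduces to two cases: writing $\Gamma_0$ for the identity component, either $\rho|_{\Gamma_0}$ is non-trivial, handled by the torus argument, or $\rho$ factors through the finite quotient $\pi_0(\Gamma)=\Gamma/\Gamma_0$, handing us the finite case.

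The main obstacle is exactly the finite case of the separatedness $\bigcap_n I^n=0$: as the $\bZ/4$ example shows, here no characteristic-class shortcut exists and the statement is genuinely about the torsion in $K^0(BG)$. This is Atiyah's classical computation of $K^0(BG)$ for finite $G$, which I would cite rather than reprove. Granting it, part \Cref{item:cpctlie} follows, and part \Cref{item:hencefin} is then immediate: a finite group is discrete, so $\rho$ factors through $U(q)$ with the discrete topology and $E_\rho$ is flat by definition, while its non-triviality as a plain bundle is precisely the content of part \Cref{item:cpctlie}.
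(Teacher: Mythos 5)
There is a genuine gap, and it sits exactly where you locate the ``main obstacle'': the claim that $R(G)$ is $I$-adically separated for finite $G$ is false, and Atiyah's computation of $K^0(BG)$ does not deliver it. Take $G=\bZ/6$, so $R(G)=\bZ[t]/(t^6-1)$ and $I=(t-1)$. The element $x=\Phi_1(t)\Phi_2(t)\Phi_3(t)=(t-1)(t+1)(t^2+t+1)$ is non-zero (it has degree $4<6$), yet $(t^2-t+1)\cdot x=t^6-1=0$; writing $t^2-t+1=1-(t-t^2)$ with $t-t^2\in I$ gives $x=(t-t^2)x=(t-t^2)^nx\in I^n$ for all $n$, so $\bigcap_n I^n\neq 0$. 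What Atiyah's theorem actually says is that the kernel of $R(G)\to K^0(BG)$ is the set of virtual characters vanishing on all elements of \emph{prime power order} (the element $x$ above is such a character, supported on the two order-$6$ elements), and Jackowski--Oliver extend this to compact Lie groups with ``elements'' replaced by ``components of $\pi_0$''. Since you reduce the whole proposition to the finite case whenever $\rho|_{\Gamma_0}$ is trivial, this breaks part \Cref{item:cpctlie} as well as part \Cref{item:hencefin}.

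The missing step --- which is in fact the entire content of the paper's proof --- is to check that $\chi_\rho-q$ lies in this (generally non-zero) kernel only when $\rho$ is trivial: $\chi_\rho(g)$ is a sum of $q$ complex numbers of modulus $1$, so $\chi_\rho(g)=q$ forces $g\in\ker\rho$; the vanishing condition therefore puts every prime-power-order element (respectively component) into $\ker\rho$, and since every element of a finite group is a product of its prime-power-order components, $\rho$ is trivial. If you insert this argument in place of the false separatedness claim, your route (Atiyah--Segal plus Atiyah's identification of the kernel, with the maximal-torus Chern-class computation handling the connected part) becomes correct and is essentially the paper's, which simply cites Jackowski--Oliver's Corollary 1.10 for the kernel description over an arbitrary compact Lie group and then runs the same character argument. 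Your preliminary observation that ordinary characteristic classes cannot work (the $\chi^{\oplus 2}$ bundle over $B(\bZ/4)$) is correct and worth keeping.
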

\begin{proof}

  \Cref{item:hencefin} is of course just \Cref{item:cpctlie} applied to finite groups, so it will be enough to prove the latter claim.

  Per \cite[Corollary 1.10]{jo_vb}, the elements of the {\it representation ring} \cite[Definition 13.5.1]{hus_fib} $R(\Gamma)$ annihilated by the map into the Grothendieck ring of vector bundles are precisely those whose characters vanish on all connected components of $\Gamma$ of prime-power order in
  \begin{equation*}
    \pi_0(\Gamma)\cong \Gamma/\Gamma_0,\quad \Gamma_0:=\text{connected component containing }1.
  \end{equation*}
  For elements of the form
  \begin{equation*}
    \rho-\dim(\rho)=\rho-q\in R(G),\quad \Gamma\xrightarrow{\ \rho\ }U(q)\quad\text{as in the statement}
  \end{equation*}
  such vanishing means precisely that $\rho$ is trivial: the character $\chi_{\rho}$ has as its values sums of $q$ modulus-1 complex numbers so takes the value $q$ only on $\gamma\in\ker~\rho$, and {\it every} connected component of $\Gamma$ is a product of connected components of prime-power orders.
\end{proof}


In addressing \Cref{th:protoriok}, recall \Cref{re:justbdle} regarding the meaning of the term `isomorphic'. 

\pf{th:protoriok}
\begin{th:protoriok}
  For pro-tori \Cref{eq:protor.cofilt} $H^*(\bT,\bullet)$ is understood as \emph{\v{C}ech} \cite[\S I.7, pp.27-29]{bred_shf_2e_1997} (equivalently \cite[Corollary III.4.12]{bred_shf_2e_1997}, sheaf) cohomology
  \begin{equation}\label{eq:coh.colim}
    H^*(\bT,\bullet)
    \quad
    \overset{\text{\cite[Theorem II.14.4]{bred_shf_2e_1997}}}{\cong}
    \quad
    \varinjlim_i H^*(\bT^{n_i},\bullet).
  \end{equation}
  There are several stages to the argument.  
  \begin{enumerate}[(I),wide]

  \item \textbf{ Reduction to tori.} This is a simple matter of unwinding the appropriate notion of continuity for the functors of interest in the present context. Write (following \cite[\S 4.10]{hus_fib}, say) $k_{\bG}$ for the functor assigning to $X$ the set of isomorphism classes of \emph{numerable} \cite[Definitions 4.9.1 and 4.9.2]{hus_fib} principal ${\bG}$-bundles thereon. Let ${\bG}$ range over
    \begin{enumerate}[(a),wide]
    \item $U(q)$ or $PU(q)$ with their usual topologies;

    \item\label{item:th:protoriok:pf.discpu} discrete $PU(q)$;

    \item\label{item:th:protoriok:pf.12discu} or the topological-group \emph{pullback} \cite[Definition 11.8]{ahs}
      \begin{equation*}
        \begin{tikzpicture}[>=stealth,auto,baseline=(current  bounding  box.center)]
          \path[anchor=base] 
          (0,0) node (l) {$\left(U(q),\text{standard topology}\right)$}
          +(4,.5) node (u) {$\bullet$}
          +(4,-.5) node (d) {$\left(PU(q),\text{standard}\right)$}
          +(8,0) node (r) {$\left(PU(q),\text{discrete}\right)$}
          ;
          \draw[<-] (l) to[bend left=6] node[pos=.5,auto] {$\scriptstyle $} (u);
          \draw[->] (u) to[bend left=6] node[pos=.5,auto] {$\scriptstyle $} (r);
          \draw[->] (l) to[bend right=6] node[pos=.5,auto,swap] {$\scriptstyle $} (d);
          \draw[<-] (d) to[bend right=6] node[pos=.5,auto,swap] {$\scriptstyle $} (r);
        \end{tikzpicture}
      \end{equation*}
    \end{enumerate}
    so that (Hermitian) vector, matrix-algebra, projectively-flat vector or flat matrix-algebra bundles respectively are all handled simultaneously. Observe that 
    \allowdisplaybreaks
    \begin{align*}
      k_{{\bG}}(\bT)
      &\cong
        \varinjlim_i k_{{\bG}}\left(\bT^{n_i}\right)
        \quad\text{by \cite[Theorem 4 and Remark 4.11]{MR234450}}\\
      &\xrightarrow[\quad\cong\quad]{\quad c_1\text{ or }\beta_q\quad}
        \varinjlim_i H^2(\bT^{n_i},\bZ\text{ or }\mu_q)
        \quad\left(\text{assuming the result for tori}\right)\\
      &\cong
        H^2(\bT,\bZ\text{ or }\mu_q)
        \quad\text{\Cref{eq:coh.colim}}.
    \end{align*}
    We will thus henceforth assume $\bT\cong \bT^n$. 
    
  \item \textbf{ Surjectivity.} For \Cref{item:projflc1} we can simply take $\cE$ to be a sum of a line bundle with prescribed Chern class \cite[Remark 10.1.4]{hjjm_bdle} (automatically projectively flat) and a trivial rank-$(q-1)$-bundle. As for \Cref{eq:flbeta}, its surjectivity follows from that of \Cref{eq:projflc1} via \Cref{eq:z2muq} by considering bundles of the form $\cA\cong \cE\otimes \cE^*$.

  \item \textbf{ Injectivity: \Cref{item:flbeta}} It will be convenient to work directly with {\it principal} bundles, in the present case over $PU(q)$. Consider two such, $\cP_i$, $i=1,2$, obtained respectively as quotients by the actions
    \begin{equation*}
      \bR^n\times PU(q)\ni
      (x,a)
      \xmapsto{\quad\gamma\in\Gamma:=\bZ^n\quad}
      (x+\gamma,\pi_i(\gamma)^{-1}a)
      \in \bR^n\times PU(q)
    \end{equation*}
    for representations
    \begin{equation*}
      \Gamma\xrightarrow{\quad\pi_i\quad}PU(q),\quad i=1,2.
    \end{equation*}
    Lifting the $\pi_i$ as maps (not {\it morphisms}, generally)
    \begin{equation*}
      \Gamma\xrightarrow{\quad\widehat{\pi}_i\quad}SU(q),\quad i=1,2.      
    \end{equation*}
    along $SU(q)\to PU(q)$ will produce the corresponding $\beta$ invariants as the 2-cohomology classes associated to the 2-cocycles (\cite[(1.1)]{slw_fact}, say)
    \begin{equation*}
      \Gamma^2\ni (\gamma,\gamma')
      \xmapsto{\quad z_i\quad}
      \widehat{\pi}_i(\gamma)\cdot \widehat{\pi}_i(\gamma')\cdot \widehat{\pi}_i(\gamma+\gamma')^{-1}
      ,\quad i=1,2.
    \end{equation*}
    We are assuming that $z_i$ are cohomologous, i.e.
    \begin{equation*}
      z_1(\gamma,\gamma')\cdot z_2(\gamma,\gamma')^{-1} = f(\gamma)\cdot f(\gamma')\cdot f(\gamma+\gamma')^{-1}
      ,\quad \forall \gamma,\gamma'\in\Gamma
    \end{equation*}
    for some
    \begin{equation*}
      \Gamma
      \xrightarrow{\quad f\quad}
      \text{central }\mu_q\subset SU(q).
    \end{equation*}
    Replacing $\widehat{\pi}_2$ with $f\cdot \widehat{\pi}_2$ allows us to assume the cocycles $z_i$ are in fact {\it equal} to a common unadorned $z$, as we henceforth will.

    Consider, now, the kernel $H\le \Gamma$ of the skew-symmetric bicharacter
    \begin{equation*}
      \Gamma^2\ni (\gamma,\gamma')
      \xmapsto{\quad\chi\quad}
      z(\gamma,\gamma')\cdot z(\gamma',\gamma)^{-1}\in \bS^1\subset U(q)
    \end{equation*}
    (that uniquely determines and corresponds to the cohomology class of $z$ \cite[Proposition 3.2]{opt_erg}):
    \begin{equation*}
      H:=\{h\in\Gamma\ |\ \chi(h,\gamma)=1,\ \forall \gamma\in\Gamma\}.
    \end{equation*}
    $\widehat{\pi}_i$ can then be assumed to be genuine representations on $H$. Because the latter is free abelian, the argument proving \Cref{le:flatistriv} (and \cite[Theorem 3.2]{as_vbtor}) further allows us to modify $\widehat{\pi}_i$ so as to preserve the isomorphism classes of the bundles while also ensuring that the restrictions $\widehat{\pi}_i|_{H}$ are trivial.

    We have now reduced the discussion to two projective representations of the quotient $\Gamma':=\Gamma/H$ which give rise to equal 2-cocycles on that group. Because (the bicharacter induced by) $\chi$ on $\Gamma'$ is by construction non-degenerate (i.e. has trivial kernel), the desired conclusion follows from the essential uniqueness (\cite[Theorem 2.5]{slw_fact}, \cite[\S 1.1]{el_kproj}) of the irreducible projective representation attached to a cocycle.

  \item \textbf{ Injectivity: \Cref{item:projflc1}} If two projectively flat rank-$q$ vector bundles $\cE_i$ have the same first Chern class then, by \Cref{re:munclass}, the two flat \cite[Proposition I.4.23]{kob_cplx} $q\times q$ matrix bundles $\cE_i\otimes \cE_i^*$ have the same characteristic class. Having disposed of part \Cref{item:flbeta}, this means that
    \begin{equation*}
      \cE_1\otimes \cE_1^*
      \cong
      \cE_2\otimes \cE_2^*.
    \end{equation*}
    It follows \cite[Théorème 9]{dd} that $\cE_2\cong \cE_1\otimes \cL$ for some line bundle $\cL$, and hence
    \begin{equation*}
      c_1(\cE_2) = c_1(\cE_1) + q c_1(\cL)
      \quad(\text{by \cite[\S II.1, (1.9) and (1.10)]{kob_cplx}, for instance}).
    \end{equation*}
    Because we are also assuming $c_1(\cE_i)$ equal and $H^2(\bT^n,\bZ)$ is torsion-free,
    \begin{equation*}
      c_1(\cL)=0
      \xRightarrow{\quad}
      \cL\text{ is trivial}
      \xRightarrow{\quad}
      \cE_1\cong \cE_2. 
    \end{equation*}
  \end{enumerate}
  This completes the proof. 
\end{th:protoriok}

In reference to realizing bundles on a (pro-)torus $\bT$ as a pullback through a quotient pro-torus $\bT\xrightarrowdbl{\pi}\ol{\bT}$, note that triviality along the fibers of $\pi$ is certainly not sufficient:

\begin{example}\label{ex:not.plbk}
  Consider a chain
  \begin{equation*}
    \begin{tikzpicture}[>=stealth,auto,baseline=(current  bounding  box.center)]
      \path[anchor=base] 
      (0,0) node (l) {$\bT^2$}
      +(2,.5) node (u) {$\bT^2$}
      +(4,0) node (r) {$\bT^2$}
      ;
      \draw[->>] (l) to[bend left=6] node[pos=.5,auto] {$\scriptstyle \text{$n_{\ell}$-fold }\pi_{\ell}$} (u);
      \draw[->>] (u) to[bend left=6] node[pos=.5,auto] {$\scriptstyle \text{$n_{r}$-fold }\pi_{r}$} (r);
      \draw[->>] (l) to[bend right=6] node[pos=.5,auto,swap] {$\scriptstyle \pi$} (r);
    \end{tikzpicture}
  \end{equation*}
  of covering morphisms. Naturally, any pullback $\pi_{\ell}^*\cE$ will be trivial on $\pi$-fibers. Were it a $\pi$-pullback, we would have
  \begin{equation}\label{eq:pipie}
    \pi_{\ell}^*\cE
    \cong
    \pi^* \cF
    \cong
    \pi_{\ell}^* \pi_r^* \cF
    \xRightarrow{\quad}
    \cE\cong \pi_r^* \cF,
  \end{equation}
  $\pi^*_{\ell}$ being one-to-one on bundle isomorphism classes (e.g. because bundles are determined by their ranks and $c_1$, per \cite[Proposition, p.2]{thad_introtop}). We can easily arrange for the right-hand side of \Cref{eq:pipie} to fail though: it suffices to select $\cE$ with $n_r$ not dividing $c_1(\cE)\in H^2(\bT^2,\bZ)\cong \bZ$. 
  
  This captures essentially the same phenomenon that drives \cite{73127} (in a somewhat different, algebraic-geometric context). 
\end{example}

\begin{remark}\label{re:w1w2chern}
  There is a common pattern to \cite[Theorems 3.2 and 3.3]{as_vbtor}, \Cref{le:flatistriv} and \Cref{th:protoriok}\Cref{item:flbeta} that it might be worthwhile spelling out. 

  The three results concern flat principal bundles on $\bT^n$ with structure groups $\bG=O(q)$, $U(q)$ and $PU(q)$ respectively, described by \cite[Proposition I.2.6]{kob_cplx} morphisms
  \begin{equation}\label{eq:rhotn2g}
    \bZ^n\cong \pi_1(\bT^n)
    \xrightarrow{\quad\rho\quad}
    \bG.
  \end{equation}
  In each case the statement is to the effect that the isomorphism class of the bundle is determined by the {\it obstruction} \cite[\S 12]{ms_char} to reducing the structure group to the universal cover $\widetilde{\bG_0}\xrightarrow{\ }\bG_0$ of the identity connected component $\bG_0\le \bG$, as we now review.


  \begin{enumerate}[(1),wide]

  \item \cite[Theorems 3.2 and 3.3]{as_vbtor} discuss real vector bundles, i.e. the structure group of interest is $\bG=O(q)$. Its identity component is $\bG_0=SO(q)$, and the universal cover $\widetilde{\bG_0}$ is the {\it Spin group} (\cite[Definition B.3.20]{hn_lgp}, \cite[Definition 3.12]{abs_cliff}) $\mathrm{Spin}(q)$. 

    The first Stiefel-Whitney class $w_1\in H^1(\bT^n,\bZ/2)$ controls \cite[Theorem II.1.2]{lm_spin} the {\it orientability} of the bundle, i.e. it is the obstruction to factoring \Cref{eq:rhotn2g} through $\bG_0=SO(q)$, while (assuming $w_1=0$, i.e. for orientable bundles) the {\it second} Stiefel-Whitney class $w_2\in H^2(\bT^n,\bZ/2)$ is the obstruction \cite[Theorem II.1.7]{lm_spin} to the existence of a {\it spin structure}.
    
  \item For \Cref{le:flatistriv} we have $\bG=U(q)$, which is already connected and incurs no obstruction: flat complex vector bundles have trivial Chern classes \cite[Proposition II.3.1 (a)]{kob_cplx}.

  \item Finally, in \Cref{th:protoriok}\Cref{item:flbeta} the group $\bG=PU(q)$ is again connected, its universal cover is $SU(q)$ \cite[Proposition 17.1.3]{hn_lgp}, and the obstruction is the 2-class $\beta\in H^2(\bT^n,\mu_q)$ discussed in \Cref{re:munclass}.
  \end{enumerate}
\end{remark}

\section{An aside on quantum tori}\label{se:tor}

To connect \Cref{th:protoriok} to the \emph{non-commutative torus algebras} $A_{\theta}^n:=C\left(\bT^n_{\theta}\right)$ (\cite[\S 1]{rief_case}, \cite[pre Proposition 12.8]{gbvf_ncg}) attached to a rational skew-symmetric matrix $\theta\in M_n(\bQ)$, recall the invariant
\begin{equation*}
  \begin{aligned}
    q^2_{\theta}
    &:=
      \text{cardinality}
      \left|
      \im\left(
      \bZ
      \xrightarrow[\quad\forall\left(\ell\in \bZ\ :\ \ell\theta\in M_n(\bZ)\right)\quad]{\quad\ell\theta\quad}
      \bZ
      \xrightarrowdbl{\ }
      \bZ/\ell
      \right)
      \right|
    \\
    &\xlongequal[\quad\text{\cite[Lemma 2.1]{2508.04922v1}}\quad]{\quad}
      \quad
      \text{index}
      \quad
      \left[\left(\bZ^n+\im\theta\right):\bZ^n\right]
  \end{aligned}
\end{equation*}
of \cite[(2-3)]{2508.04922v1} (where $q^2_{\theta}$ is denoted by $h_{\theta}$ instead). The proof of \cite[Lemma 2.1]{2508.04922v1} argues that $q^2_{\theta}$ is a perfect square (so that $q_{\theta}:=\sqrt{q^2_\theta}\in \bZ_{\ge 1}$, as the notation suggests), and by \cite[Proposition 2.6]{2508.04922v1} we have an isomorphism
\begin{equation}\label{eq:tor.mtrx.bdl}  
  C\left(\bT^n_{\theta}\right)
  \cong
  \End\left(\cE_{\theta}\right)
  :=
  \Gamma\left(\cE_{\theta}\otimes \cE_{\theta}^*\right)
  ,\quad
  \cE_{\theta}\text{ rank-$q_{\theta}$ projectively flat on}
  \Max(Z(A^n_{\theta}))\cong \bT^n
\end{equation}
with $\Gamma(\bullet)$ denoting the section space of a (vector/algebra) bundle. 

Consider a torsion-free abelian group $\Gamma$ equipped with a bi-additive skew-symmetric
\begin{equation*}
  \Gamma\times \Gamma
  \xrightarrow{\quad\sigma \quad}
  \bS^1
\end{equation*}
and the resulting (completed) \emph{cocycle twist} (\cite[\S 2.2]{el_morita-def} or \cite[Definition 7.1.1]{mont} for the broader Hopf-algebraic framework) $A_{(\Gamma,\sigma)}$ of the group algebra $\bC \Gamma$. Per \cite[Theorem, p.137]{dekr} the isomorphism class of $A_{(\Gamma,\sigma)}$ determines that of $(\Gamma,\sigma)$ provided $\sigma$ takes only torsion values. \Cref{th:rat.n.tori} affords a bundle-theoretic approach to this (at least for finitely-generated $\Gamma$).


\pf{th:rat.n.tori}
\begin{th:rat.n.tori}
  That $m$ and $n$ can be extracted (for arbitrary \emph{real} skew-symmetric $\theta$) from the isomorphism class of $A^n_{\theta}\otimes M_m$ is observed in \cite[Theorem A(2)]{2508.04922v1}, so it is enough to assume $n=n'$ and $m=m'$ throughout (incidentally simplifying the notation).

  \begin{enumerate}[label={},wide]
  \item\textbf{\Cref{item:th:rat.n.tori:tht} $\Rightarrow$ \Cref{item:th:rat.n.tori:tn}.} Adding integer matrices to $\theta$ will not affect the isomorphism class of $A^n_{\theta}$, for only
    \begin{equation*}
      \exp\left(2\pi i\cdot \left(\text{entries }\theta_{ij}\right)\right)
      \in
      \bS^1
    \end{equation*}
    enter the latter's definition \cite[pp.193-194]{rief_case}. On the other hand, $A^n_{\theta}$ being the cocycle twist of the group algebra $\bC \bZ^n$ by
    \begin{equation*}
      \bZ^n\times \bZ^n
      \xrightarrow{\quad\exp(2\pi i\theta)\quad}
      \bS^1
      \quad
      \left(\text{$\theta$ regarded as a bilinear map}\right),
    \end{equation*}
    $\theta\mapsto T\theta T^t$ for $T\in GL(n,\bZ)\cong\Aut(\bZ^n)$ implements an automorphism of the underlying $\bZ^n$.

  \item\textbf{\Cref{item:th:rat.n.tori:tn} $\Rightarrow$ \Cref{item:th:rat.n.tori:tnm}} is immediate.
    
  \item\textbf{\Cref{item:th:rat.n.tori:tnm} $\Rightarrow$ \Cref{item:th:rat.n.tori:tht}.} Observe that 
    \begin{equation*}
      A^n_{\theta}\otimes M_m
      \ 
      \overset{\text{\Cref{eq:tor.mtrx.bdl}}}{\cong}
      \ 
      \End\left(\cE_{\theta}\right)\otimes M_m
      \ 
      \cong
      \ 
      \End\left(\cE_{\theta}^{\oplus m}\right)
    \end{equation*}
    and similarly for $\theta'$ for projectively flat bundles on the homeomorphic (by \Cref{item:th:rat.n.tori:tnm}) spectra
    \begin{equation*}
      X_{\theta}
      :=
      \Max\left(Z(A^n_{\theta})\right)
      \cong
      \bT^n
      \cong
      \Max\left(Z(A^n_{\theta'})\right)
      =:
      X_{\theta'}.
    \end{equation*}
    Said bundles moreover have equal ranks $q:=q_{\theta}=q_{\theta'}$, for $mq_{\theta,\theta'}$ is (respectively) the common dimension of all irreducible $A^n_{\theta,\theta'}\otimes M_m$-representations.

    Two equal-rank algebra bundles $\End(\cE_{\bullet})$, $\bullet\in \{\theta,\theta'\}$ over a common torus $\bT^n$ will be isomorphic precisely \cite[Théorème 9]{dd} when
    \begin{equation*}
      \exists\left(\text{line bundle }\cL\right)
      \quad:\quad
      \cE_{\theta'}\cong \cE_{\theta}\otimes \cL,
    \end{equation*}
    which under the additional projective flatness condition that obtains in the present case is also equivalent by \Cref{th:protoriok} to $q|c_1(\cE_{\theta})-c_1(\cE_{\theta'})$ (divisibility in the obvious sense, in the torsion-free abelian group $H^2(\bT^n,\bZ)$). The same applies to $\cE^{\oplus m}_{\bullet}$: the condition in that case is
    \begin{equation}\label{eq:mq.div}
      \begin{aligned}
        mq
        |
        c_1\left(\cE^{\oplus m}_{\theta}\right)-c_1\left(\cE^{\oplus m}_{\theta'}\right)
        &\xlongequal[\quad]{\quad\text{\cite[Theorem 4.4.3(II)]{hirz_top}}\quad}
          m\left(c_1(\cE_{\theta})-c_1(\cE_{\theta'})\right)\\
        &\xLeftrightarrow{\quad}
          \quad
          q|c_1(\cE_{\theta})-c_1(\cE_{\theta'}),
      \end{aligned}
    \end{equation}
    i.e. $m$ makes no difference. 

    Recall next (\cite[proofof Lemma 2.1]{2508.04922v1}, via \cite[\S IX.5.1, Th\'eor\`eme 1]{bourb_alg-09}) that up to a transformation $\theta\mapsto T\theta T^t$, $T\in GL(n,\bZ)$ we may assume
    \begin{equation*}
      \theta
      =
      \begin{pmatrix}
        \phantom{-}0 & D & 0\\
        -D & 0 & 0\\
        \phantom{-}0&0&0
      \end{pmatrix}
      \quad
      \left(\text{square-block entries}\right)
      ,\quad
      D=\mathrm{diag}\left(\text{lowest-terms }\frac{p_i}{q_i}\in \bQ\right)_i,
    \end{equation*}
    so that $\cE_{\theta}$ decomposes as an exterior tensor product of trivial rank-1 bundles on circles and rank-$q_i$ bundles $\cE_{\theta,i}$ on 2-tori. The rank $q$ is $\prod_i q_i$, and \Cref{le:rief} applied to the individual $i$-indexed $\bT^2$ factors makes it clear that (perhaps up to a sign)
    \begin{equation*}
      q\theta=
      c_1\left(\cE_{\theta}\right)
      \in
      H^2(\bT^n,\bZ)
      \overset{\text{\Cref{eq:cohalt}}}{\cong}
      \Hom\left(\bZ\wedge \bZ,\bZ\right)
      \quad
      \left(\text{skew-symmetric bi-additive maps maps}\right). 
    \end{equation*}
    This and its $\theta'$ counterpart plainly render \Cref{eq:mq.div} equivalent to the rightmost condition in \Cref{item:th:rat.n.tori:tht}. 
  \end{enumerate}
\end{th:rat.n.tori}


\addcontentsline{toc}{section}{References}

\def\polhk#1{\setbox0=\hbox{#1}{\ooalign{\hidewidth
  \lower1.5ex\hbox{`}\hidewidth\crcr\unhbox0}}}
  \def\polhk#1{\setbox0=\hbox{#1}{\ooalign{\hidewidth
  \lower1.5ex\hbox{`}\hidewidth\crcr\unhbox0}}}
  \def\polhk#1{\setbox0=\hbox{#1}{\ooalign{\hidewidth
  \lower1.5ex\hbox{`}\hidewidth\crcr\unhbox0}}}
  \def\polhk#1{\setbox0=\hbox{#1}{\ooalign{\hidewidth
  \lower1.5ex\hbox{`}\hidewidth\crcr\unhbox0}}}
  \def\polhk#1{\setbox0=\hbox{#1}{\ooalign{\hidewidth
  \lower1.5ex\hbox{`}\hidewidth\crcr\unhbox0}}}
  \def\polhk#1{\setbox0=\hbox{#1}{\ooalign{\hidewidth
  \lower1.5ex\hbox{`}\hidewidth\crcr\unhbox0}}}

\Addresses

\end{document}